\documentclass[reqno]{amsart}

\usepackage{amsmath,amssymb,amsfonts,amsthm}
\usepackage[dvipsnames]{xcolor}
\usepackage{tikz}
\usetikzlibrary{arrows.meta}

 \usepackage{enumitem}

\DeclareFontFamily{OT1}{pzc}{}
\DeclareFontShape{OT1}{pzc}{m}{it}{<-> s * [0.900] pzcmi7t}{}
\DeclareMathAlphabet{\mathscr}{OT1}{pzc}{m}{it}

\usepackage{versions}

\usepackage{graphicx}
\usepackage{grffile}

\includeversion{note}
\includeversion{longcalc}
\excludeversion{shortcalc}

\usepackage[utf8]{inputenc}

\usepackage[nonotes]{optional}

\newcommand{\arjun}[1]{\opt{notes}{\todo[size=\tiny,backgroundcolor=white,noline]{\textcolor{Mahogany}{arjun: #1}}}}
\newcommand{\notes}[1]{\opt{notes}{\textcolor{OliveGreen}{#1}}}

\usepackage[sorting=none,natbib=true,style=numeric,backend=biber]{biblatex}

\makeatletter
\@namedef{subjclassname@2020}{%
  \textup{2020} Mathematics Subject Classification}
\makeatother

\usepackage[normalem]{ulem} %

\newcommand{\Figref}[1]{Fig.~\ref{#1}}

\newcommand{\Thmref}[1]{Theorem~\ref{#1}}

\newcommand{\Lemref}[1]{Lemma~\ref{#1}}

\newcommand{\R}{\mathbb{R}} %
\newcommand{\Z}{\mathbb{Z}} %
\newcommand{\Prob}{\mathbb{P}} %

\newcommand{\w}{\omega}

\newcommand{\AND}{\textrm{ and }} 
\newcommand{\OR}{\textrm{ or }}

\renewcommand{\liminf}{\varliminf}

\newcommand{\E}{\mathbb{E}}

\renewcommand{\emph}[1]{\textbf{#1}}

\newcommand{\arrow}{\alpha}

\theoremstyle{plain}
\newtheorem{theorem}{Theorem}[section]
\newtheorem*{theorem*}{Theorem}
\newtheorem{lemma}[theorem]{Lemma}
\newtheorem*{lemma*}{Lemma}

\newtheorem*{prop*}{Proposition}

\theoremstyle{definition}

\newtheorem*{question*}{Question}

\newtheorem*{example*}{Example}
\newtheorem{claim}[theorem]{Claim}

\theoremstyle{remark}
\newtheorem{remark}{Remark}
\newtheorem*{remark*}{Remark}
\newtheorem*{acknowledgement}{Acknowledgement}

\definecolor{OliveGreen}{HTML}{447700}
\newcommand{\oldnotes}[1]{\opt{oldnotes: oldnotes}{#1}}

\def\gpp{g_{\text{\rm pp}}}

\newcommand{\coal}{\operatorname{Coal}}
\newcommand{\coalalt}{\widehat{\operatorname{Coal}}}
\newcommand{\coalpt}[1]{\mathscr{Z}(#1)}
\newcommand{\walk}[1]{\mathscr{X}_{#1}}
\newcommand{\future}[1]{\operatorname{FUT}(#1)} %

\def \path {\operatorname{PATH}}

\newcommand{\gexp}[1][]{g_{\text{Exp}}}

\newcommand{\directions}{\mathcal{U}}

\DeclareMathOperator{\supp}{supp}

\newcommand{\minw}{\overline{m}} %

\newcommand{\ed}{e_d} %
\newcommand{\ead}{e_{ad}} %

\newcommand{\origin}{0}

\usepackage[urlcolor=blue,colorlinks=true,linkcolor=blue,citecolor=blue]{hyperref}

\keywords{Busemann function, coalescence, first-passage percolation, geodesic, last-passage percolation, stationary compatible walks}
\subjclass[2020]{60K35, 60K37}
\begin{document}
\title{First pair coalescence among three geodesics}
\author[A.~Krishnan]{Arjun Krishnan}
\address{Arjun Krishnan\\ University of Rochester\\  Department of Mathematics\\ Rochester, NY 14627\\ USA.}
\email{arjun@shirleyarjun.net}
\urladdr{https://shirleyarjun.net/homepage}
\thanks{A.\ Krishnan was partially supported by a Simons Collaboration grant 638966.}

\author[F.~Rassoul-Agha]{Firas Rassoul-Agha}
\address{Firas Rassoul-Agha\\ University of Utah\\  Mathematics Department\\ 155S 1400E\\   Salt Lake City, UT 84112\\ USA.}
\email{firas@math.utah.edu}
\urladdr{https://www.math.utah.edu/~firas}
\thanks{F.\ Rassoul-Agha was partially supported by National Science Foundation grants DMS-2054630 and DMS-2450951 and Simons Foundation grant MPS-TSM-00013661.}

\author[T.~Sepp\"al\"ainen]{Timo Sepp\"al\"ainen}
\address{Timo Sepp\"al\"ainen\\ University of Wisconsin-Madison\\  Mathematics Department\\ Van Vleck Hall\\ 480 Lincoln Dr.\\   Madison WI 53706-1388\\ USA.}
\email{seppalai@math.wisc.edu}
\urladdr{https://people.math.wisc.edu/~seppalai/}%
\thanks{T.~Sepp\"al\"ainen was partially supported by National Science Foundation grants  DMS-1854619, DMS-2152362, and DMS-2448375, by Simons Foundation grant 1019133, and by the Wisconsin Alumni Research Foundation.} 
\begin{abstract}
In directed last-passage percolation with i.i.d.\ exponential vertex weights, the coalescence time $\coal(a,b)$ of two semi-infinite geodesics started from vertices  $a$ and $b$ in a fixed direction has a heavy tail with (KPZ) exponent $2/3$, and hence infinite expectation. We study instead the first coalescence among triples of semi-infinite geodesics in first- and last-passage percolation models (FPP/LPP) on $\Z^2$ that arise from stationary compatible families, such as Busemann geodesics. For directed FPP and LPP with arbitrary weights, we prove that
\[\E\bigl[\min\{\coal(a,a-e_1),\coal(a,a-e_2)\}\bigr]<\infty.\]
We also consider FPP and LPP models in which antidiagonal motion is allowed, and prove that for every integer $k>0$,
\[\E\bigl[\min\{\coal(a,a-k(e_1-e_2)),\coal(a,a+k(e_1-e_2))\}\bigr]<\infty.\]
These results support the heuristic prediction that adjacent coalescence times are negatively associated.
\end{abstract}
\maketitle

\section{Introduction}
Directed first- and last-passage percolation (FPP/LPP) are models for random growth on a graph. In this paper we focus on graphs whose vertex set is \(\Z^2\). Attached to each vertex is a set $E$ of outgoing edges, which we take to be either $E=\{e_1,e_2\}$ or $E=\{e_1,e_2,\pm \ead\}$.  Here  $e_1, e_2$ are the standard basis vectors and $\ead = e_1 - e_2$ points in the  antidiagonal direction.  Later we also make use of the diagonal direction vector 
$\ed = e_1 + e_2$. A set of random weights $\{\w_{x,x+e}\}_{e \in E}$ is attached to the outgoing edges from each vertex $x$. Their  probability distribution is denoted by $\Prob$ and expected value is denoted by $\E$.

An \emph{admissible path} is a sequence of points in $\Z^2$ whose successive increments belong to $E$. We say that \(x \prec  y\) if  \(x\) and \(y\) can be connected by an admissible path \(\Gamma = \{x=x_0,x_1, \ldots, x_k=y\}\).
The \textbf{passage-time} of a finite admissible path \(\Gamma\) is 
\[W(\Gamma) = \sum_{i = 0}^{k-1} \w_{x_i,x_{i+1}},\]
and the \textbf{first-passage time} from $x$ to $y$ is
\begin{align}\label{eq:passage time definition}
    G(x,y) = \inf_{\Gamma} W(\Gamma),
\end{align}
where the infimum is over all admissible paths from $x$ to $y$. A \textbf{geodesic} is an admissible path between points \(x\) and \(y\) that achieves the infimum in \eqref{eq:passage time definition}. An \textbf{infinite geodesic} is an infinite path that is a geodesic between any two of its points. 

Write $\bar\w_x=\{\w_{x,x+e}:e\in E\}$ for the environment at vertex $x$. 
When $E=\{e_1,e_2,\pm\ead\}$, we assume
\begin{align}\label{assumption}
\begin{split}
&\{\bar\w_x:x\in\Z^2\}\text{ are independent and identically distributed},\\
&\E[(\w_{0,e_i})^-]<\infty,\quad i\in\{1,2\},\\
&\forall M<\infty,\ \Prob(\w_{0,e_1}\ge M,\ \w_{0,e_2}\ge M)>0,\\
&\exists\, c\in(0,\infty)\,:\,\Prob(0\le \w_{0,\ead}\le c)=\Prob(0\le \w_{0,-\ead}\le c)=1,\quad\text{and}\\
&\Prob(\w_{0,\ead}+\w_{0,-\ead}>0)>0.
\end{split}
\end{align}
When $E = \{e_1,e_2\}$, we do not assume anything about the weights, and Theorem \ref{thm:the coalescence time of three vertices has finite expectation} applies regardless.

The nonnegativity of the antidiagonal weights ensures that antidiagonal loops have nonnegative cost and hence can be removed without increasing the passage time, while the last assumption ensures that sufficiently long antidiagonal excursions incur a positive linear cost almost surely. The integrability assumption ensures that exceptionally negative $e_1$ and $e_2$ edge weights encountered along such excursions grow only sublinearly and therefore cannot offset this linear cost. Lemma \ref{lem:finite-p2p-geo} then shows that point-to-point first-passage times are finite and are attained by finite geodesic paths. The upper bound $c<\infty$ provides uniform control on the cost of paths that use a bounded number of antidiagonal edges, which will be used in the comparison arguments below. The joint unboundedness of the $e_1$ and $e_2$ edge weights and the independence across vertices are technical assumptions needed for the proof.

\begin{remark}
  The last-passage time is obtained by replacing the infimum in \eqref{eq:passage time definition} with a supremum. Consequently, all of our definitions and results for first-passage percolation have direct last-passage analogues obtained by replacing the weights $\w_{x,x+e}$ with $-\w_{x,x+e}$ and reversing the corresponding inequalities. For example, the fourth assumption in \eqref{assumption} becomes $\Prob(-c\le \w_{0,\pm\ead}\le 0)=1$.
\end{remark}

If $E = \{e_1,e_2\}$, let $\directions = \{ \xi=(\xi_1, \xi_2)\in\R^2 \colon \xi_1 + \xi_2 = 1, \xi_1, \xi_2 > 0 \}$, and if $E = \{e_1,e_2,\pm \ead\}$, let $\directions =  \{ \xi \colon \xi_1 + \xi_2 = 1 \}$ denote the set of relevant macroscopic directions. If $\w_{0,e}$, $e\in E$, are in $L^p$ for some $p>2$, then the subadditive ergodic theorem ensures the existence and finiteness of the so-called \emph{time-constant}
$$
    \gpp(\xi)=\lim_{n \to \infty} \frac{G(0,[n\xi])}{n} \quad\text{a.s.\ and in }L^1,
$$
for each $\xi \in \directions$, where $[y] \in \Z^2$ denotes the coordinatewise integer part of $y$. See %
(2.8) in \cite{MR4982325}. Outside of the case of LPP with $E=\{e_1,e_2\}$ and exponential or geometric vertex weights (i.e., $\w_{x,x+e_1}=\w_{x,x+e_2}$), the time-constant is poorly understood.

Suppose $\{x_n \in \Z^2\}$ is such that $x_n\cdot\ed \to \infty$ and $x_n/(x_n\cdot\ed) \to \xi \in \directions$. A Busemann function in direction $\xi$ is defined by the limit 
\begin{equation}
    B(a,b) = \lim_{n \to\infty} \left( G(a,x_n) - G(b,x_n) \right),\quad a,b\in\Z^2,
    \label{eq:busemann-fn-definition}
\end{equation}
if it exists. When $E=\{e_1,e_2\}$ and weights are on the vertices and are i.i.d.\ with $p>2$ moments, \citet{MR3535900} proved, under the unproven assumption that $\gpp$ is differentiable on $\directions$, that Busemann functions exist almost surely and are stationary under lattice translations. Similar results are expected to hold for edge weights and also when $E=\{e_1,e_2,\pm\ead\}$, since admissible paths do not backtrack in the $\ed$-direction and cannot cross without meeting. More generally, \citet{MR4982325} treat percolation and random polymer models that include edge and vertex weights and both choices of $E$ considered here. Its Theorem 4.5 constructs \emph{generalized Busemann functions}, obtained roughly by replacing the almost sure limits in \eqref{eq:busemann-fn-definition} with weak limits. This construction requires an augmentation of the probability space, while preserving invariance under lattice translations, and does not require regularity assumptions on the shape-function. 

Given a generalized Busemann function, let
\begin{equation}
    \arrow(x) = \operatorname*{argmax}_{y \in E} \left( B(x,x + y) - \w_{x,x+y} \right)
    \label{eq:arrow definition}
\end{equation}
be the arrow at $x$. Ties in \eqref{eq:arrow definition} may be broken by choosing some fixed shift-invariant rule. Given any $x \in \Z^2$, we form an admissible path $\walk{x}$ as follows:  $\walk{x}(0) = x$, and $\walk{x}(n) = \walk{x}(n-1) + \arrow(\walk{x}(n-1))$ for $n \geq 1$. It is shown in \cite{MR4982325} that these walks satisfy the following properties:
\begin{enumerate}[label={\rm(\alph*)}, ref={\rm\alph*}]
\item\label{geo}[Geodesics] For all $x\in \Z^2$,
\begin{align}\label{walkisgeo}
\walk{x}\text{ is an infinite geodesic out of $x$}.
\end{align}
\item\label{stat}[Stationarity] For all $x,z \in \Z^2$ and $k \in \Z_{\geq 0}$,
\begin{equation}
    \walk{z}(\w, k) = x + \walk{z-x}(T^x \w, k),
    \label{eq:stationarity}
\end{equation}
\item\label{comp}[Compatibility] For all $z\in\Z^2$ and $k\in\Z_{\ge0}$,
\begin{equation}
    \walk{z}(\w,k+1) = \walk{\walk{z}(\w,1)}(\w,k) .
    \label{eq:compatibility}
\end{equation}
\item\label{backindep}[Backward-independence] For every $a \in \Z^2$, let $L_a=\{a+(k,-k):k\in\Z\}$ be the antidiagonal line through $a$. Then 
\begin{equation}\label{assumption:busemann geodesics are independent of the weights behind it}
\text{$\{\walk{z} \colon (z - a) \cdot \ed \geq 0 \}$ are independent of the weights strictly below $L_a$.}
\end{equation}
\end{enumerate}

Our main Theorem \ref{thm:the coalescence time of three vertices has finite expectation} below applies to any family of walks $\{\walk{z}:z\in\Z^2\}$ that satisfies stationarity \eqref{eq:stationarity} and compatibility \eqref{eq:compatibility}.
Theorem  \ref{thm: at least one of pair of geodesics distance k apart expected to coalesce in finite time}, on the other hand, requires assumptions \eqref{assumption} on the weights and all four properties \eqref{walkisgeo}-\eqref{assumption:busemann geodesics are independent of the weights behind it} of the walks.

Let $\coalpt{a,b} \in \Z^2$ be the coalescence point of $\walk{a}$ and $\walk{b}$. If they do not coalesce, we set it to be $\infty$. We define the coalescence time of the two walks as
\begin{equation}
    \coal(a,b) := \max \left( (\coalpt{a,b} - a) \cdot \ed , (\coalpt{a,b} - b) \cdot \ed \right),
    \label{eq:coalescence time definition}
\end{equation}
and $\coal(a,b) = +\infty$ if the walks do not coalesce. 
When $E = \{e_1,e_2\}$, $\coal(a,b)$ equals the maximum number of $e_1$ and $e_2$ steps taken by each walk before coalescing. An alternate coalescence time we could consider is the maximal total number of steps each walk takes before coalescing:
\begin{equation}
    \coalalt(a,b) := \max \{ k \colon \walk{a}(k) = \coalpt{a,b} \OR \walk{b}(k) = \coalpt{a,b} \}.
    \label{eq:alternate coalescence time definition}
\end{equation}
When $E=\{e_1,e_2\}$, we have $\coal(a,b) = \coalalt(a,b)$, but they can be significantly different when $E = \{ e_1, e_2, \pm \ead\}$. We cannot deal directly with $\coalalt(a,b)$ using the methods of this paper, and leave that to future work. 

It is an open problem to describe the behavior of the coalescence time in general FPP and LPP models. There has been a lot of recent progress in the solvable models in LPP; i.e., with $E=\{e_1,e_2\}$ and i.i.d.\ exponential or geometric vertex weights. In a beautiful paper, \citet{pimentel_duality_2013} proved that the coalescence time $T_m = \coal((0,m),(m,0))$ grows like $m^{\zeta}$, where $\zeta \geq 3/2$. The matching upper bound of $\zeta \leq 3/2$ was subsequently proved by \citet{basu_coalescence_2017}. Further, \citet{pimentel_duality_2013} gives heuristics that indicate that the limiting coalescence time $T_m/m^{3/2}$ has heavy-tailed behavior with the following exponent: 
\begin{equation}
    \liminf_{m \to \infty} \Prob( T_m/m^{3/2} \geq r) \approx \frac{1}{r^{2/3}}. 
    \label{eq:pimentel-guess-for-coalescence-time-scale}
\end{equation}
A much stronger version of equation \eqref{eq:pimentel-guess-for-coalescence-time-scale} was proved in \cite{MR4002528}. Consider any two points $a,b$ on the antidiagonal line $L_0 = \{ (x,y) \in \Z^2 \colon x + y = 0 \}$ of the form $a = (-[k^{2/3}],[k^{2/3}])$ and $b = ([k^{2/3}],-[k^{2/3}])$. Then, \cite[Theorem 2]{MR4002528} states that there exist positive constants $C_1,C_2,R_0$ such that for all $k > 0$ and $R > R_0$:
\begin{equation}
    C_1 R^{-2/3} \leq \Prob \left( \coal(a,b) > Rk  \right) \leq C_2 R^{-2/3}.
    \label{eq:coalescence time lower and upper bounds}
\end{equation}
Without using integrable probability methods, Shen and Sepp\"al\"ainen \cite{MR4125790} proved the same bounds in \eqref{eq:coalescence time lower and upper bounds}. In particular, these results imply that $\E[ \coal(a,b) ] = +\infty$.

We consider instead the coalescence times $\coal(0,-e_2)$ and $\coal(0,-e_1)$. Heuristically, these ought to be negatively associated since if $\walk{0}$ and $\walk{-e_2}$ take a long time to coalesce, then it is probably because they both wandered off in different directions and are quite far apart. This indicates that $\walk{0}$ is more likely to travel towards $\walk{-e_1}$, and thus more likely to coalesce with it. Thus, we ought to expect that for any $r > 0$
\begin{align*}
    \Prob(  \coal(0,-e_2) \geq r , \coal(0,-e_1) \geq r )
    & \leq \Prob(  \coal(0,-e_2) \geq r ) \Prob(  \coal(0,-e_1) \geq r ) \\
    & \leq \left( \frac{1}{r^{2/3}} \right)^2
\end{align*}
and hence 
\[
    \E[ \min( \coal(0,-e_1), \coal(0,-e_2) ) ] < \infty.
\]
Our first main result proves this last statement using a refined ``in-expectation'' version of the Burton-Keane argument. We remark that the opposite corner $(e_2,\origin,e_1)$ is not of interest, because under $E=\{e_1,e_2\}$, $\min(\coal(0,e_1),\coal(0,e_2)) = 1$ automatically. 
\begin{theorem}
    Consider $E=\{e_1,e_2\}$. Let $\{\walk{a}\}_{a \in \Z^2}$ be a family of walks satisfying stationarity \eqref{eq:stationarity} and compatibility \eqref{eq:compatibility}. 
    Let \[{T}_{a} = \min \left( \coal(a, a-e_1), \coal(a,a - e_2) \right)\] be the minimal coalescence time of the two adjacent pairs of vertices $(a,a-e_1)$ and $(a,a-e_2)$. Then, 
    $\E[ {T}_{a} ] < \infty$.
    \label{thm:the coalescence time of three vertices has finite expectation}
\end{theorem}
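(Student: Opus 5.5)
The plan is a Burton--Keane-type counting argument carried out in expectation, using only stationarity \eqref{eq:stationarity}, compatibility \eqref{eq:compatibility} and planarity. Write $L_j=\{x:x\cdot\ed=j\}$ for the antidiagonal levels. Since $E=\{e_1,e_2\}$, every walk moves up exactly one level per step.

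For $k\ge 0$ let $H_k\subset L_0$ be the set of points $z\in L_0$ with $z=\walk{x}(k)$ for some $x\in L_{-k}$, i.e.\ the points of $L_0$ hit by walks started $k$ levels below. Set $p_k=\Prob(0\in H_k)$. Compatibility gives $H_{k+1}\subseteq H_k$: if $z=\walk{x}(k+1)$ then $z=\walk{y}(k)$ with $y=\walk{x}(1)\in L_{-k}$. Hence $p_k$ is nonincreasing in $k$, and $p_0=1$.

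By stationarity, $\Prob(T_a\ge r)$ does not depend on $a$, so it suffices to prove
\[
\Prob(T_a> r)\le p_r-p_{r+1}\qquad(r\ge 0).
\]
The sum over $r$ then telescopes to $\E[T_a]\le \sum_{r\ge0}\Prob(T_a>r)\le p_0=1$.

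To get this inequality, work on the shifted picture: $a\in L_{-r}$, its neighbours $a-e_1,a-e_2\in L_{-r-1}$, and the map $a\mapsto \walk{a}(r)\in L_0$ on the event $\{T_a>r\}$. There are two planarity facts to check.

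\begin{enumerate}[label=(\roman*)]
\item Walks never cross, since two paths using $e_1,e_2$ steps that swap order must share a vertex, after which they coincide by compatibility. So the left-to-right order along each level is preserved. The points $a-e_2$ and $a-e_1$ are consecutive on $L_{-r-1}$, and $a$ lies between them in this order, so at every level $\walk{a}$ lies weakly between $\walk{a-e_2}$ and $\walk{a-e_1}$.
\item On $\{T_a>r\}$ the walk $\walk{a}$ has merged with neither neighbour by level $0$. Then $\walk{a}(r)$ lies strictly between $\walk{a-e_2}(r+1)$ and $\walk{a-e_1}(r+1)$. Any walk from $L_{-r-1}$ starts weakly left of $a-e_2$ or weakly right of $a-e_1$, so by order preservation it cannot reach the open gap between them. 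Hence $\walk{a}(r)\in H_r\setminus H_{r+1}$.
\end{enumerate}

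Moreover, two distinct $a,a'\in L_{-r}$ with $T_a,T_{a'}>r$ have images separated by some walk from $L_{-r-1}$, so the images are distinct. Thus the map is injective into $H_r\setminus H_{r+1}$.

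The step I expect to need the most care is converting this pathwise injection into the probability inequality, since there are infinitely many points per level. I would use the mass transport principle along the antidiagonal. Let each $a\in L_{-r}$ send unit mass to $\walk{a}(r)$ when $T_a>r$. Shift invariance, which follows from \eqref{eq:stationarity}, equates the expected mass sent out of $a$, namely $\Prob(T_a>r)$, with the expected mass received at a fixed point $z\in L_0$. By injectivity and step (ii), the mass received at $z$ is at most $\mathbf 1\{z\in H_r\setminus H_{r+1}\}$, whose expectation is $p_r-p_{r+1}$ by stationarity.

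One must still check that this shift preserves the measure and the indicator events: $T_a$ is a translation covariant functional of $\{\walk{x}\}$, and $H_k$ shifts covariantly. With that, the proof closes with $\E[T_a]\le 1$.
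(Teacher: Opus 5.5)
Your proof has the right structure, but the key inequality is off by one. For $a\in L_{-r}$ one has $\coal(a,a-e_i)=(\coalpt{a,a-e_i}-a)\cdot\ed+1$. So $\{T_a>r\}$ only forces both coalescence points to lie on levels $\ge 0$. The merger can happen exactly on $L_0$, i.e.\ $\walk{a}(r)=\walk{a-e_i}(r+1)\in H_{r+1}$, and then your step (ii) fails.

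Concretely, $\Prob(T_a>r)\le p_r-p_{r+1}$ is false already at $r=0$. There the left side is $1$, since $T_a\ge 1$ always. The right side is $1-p_1<1$, because $p_1>0$: otherwise stationarity along $\ead$ would make $H_1$ empty a.s. For the same reason your final bound $\E[T_a]\le 1$ would force $T_a=1$ a.s., which is false in general.

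The repair is to run (ii), the injectivity step and the transport on the event $\{T_a>r+1\}$. That event does guarantee $\walk{a}(r)\neq\walk{a-e_i}(r+1)$ for $i=1,2$. You then get $\Prob(T_a>r+1)\le p_r-p_{r+1}$, and hence $\E[T_a]\le \Prob(T_a>0)+\sum_{r\ge 0}(p_r-p_{r+1})\le 2$.

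With this correction you have a valid proof by a different route from the paper's.

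\textbf{The paper's route.} It runs Burton--Keane once, in a two-dimensional box $R$ of side $n$. A site in the $k$-th row of $R$ is counted if its triple survives for time $2(n-k)$. Lemma~\ref{lem:controlling the distance to coalescence}, which plays the role of your separation and injectivity steps, forces counted walks to leave $R$ through disjoint points of $\partial R$, so at most $3n$ sites are counted. The height-dependent time budget is what turns the expected count into the tail sum $\sum_{k\le n}\Prob(T_0>2k)\le 3$.

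\textbf{Your route.} You fix the time $r$ and inject the surviving sites of $L_{-r}$ into the ``lost'' set $H_r\setminus H_{r+1}\subset L_0$. You replace boundary counting by the mass transport principle on $\Z$, and let the telescoping in $r$ do the summation.

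\textbf{Trade-offs.} Your version is boundary-free and gives a sharper explicit constant. It also gives a structural bound tying the tail of $T_a$ to the decay of the density $p_r$ of points of $L_0$ reached from $r$ levels below. The paper's box formulation is the one it reuses, with barrier events $F_{a,k}$ on a sparse sublattice, for Theorem~\ref{thm: at least one of pair of geodesics distance k apart expected to coalesce in finite time}. There walks can also move within a level, so your step-indexed map $a\mapsto\walk{a}(r)$ would have to be reworked.
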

\begin{remark}
    Theorem \ref{thm:the coalescence time of three vertices has finite expectation} applies to all families of stationary, compatible geodesics in directed FPP and LPP with edge set $E = \{e_1,e_2\}$. No assumptions on the weights $\w_{x,e}$ are needed, except to ensure the existence of stationary compatible families of geodesics. For example, i.i.d.\ weights and $\E[|\w_{x,e}|^p] < \infty$ for some $p > 2$ are sufficient. See Theorem 4.5 of \citep{MR4982325} for more general assumptions.
\end{remark}

The next theorem strengthens the preceding result under an additional backward-independence assumption. However, we are only able to prove it for a model in which additional antidiagonal movement is allowed. This helps in the construction of barrier events, which is a key component of the argument; see Figure \ref{fig:the box R} and Lemma \ref{lem:the edge configuration on Ba blocks geodesics}. 

\begin{theorem}\label{thm: at least one of pair of geodesics distance k apart expected to coalesce in finite time}
   Suppose $E=\{e_1,e_2,\pm\ead\}$ and assume that the assumptions in \eqref{assumption} hold. Suppose we are given a family of geodesics $\{\walk{a}\}_{a\in\Z^2}$ satisfying the stationarity, compatibility, and backward-independence properties \eqref{walkisgeo}-\eqref{assumption:busemann geodesics are independent of the weights behind it}. For each $k \in \Z_{> 0}$, let
\begin{equation*}
    T_{a,k} = \min( \coal(a,a-k \ead), \coal(a, a + k \ead)).
\end{equation*}
   Then, $\E[ T_{a,k} ] < \infty$. 
\end{theorem}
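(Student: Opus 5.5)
The plan is to run an ``in-expectation'' Burton--Keane argument through a mass-transport identity. This reduces $\E[T_{0,k}]$ to a sum over levels of probabilities of events at a single vertex. Those probabilities are then controlled using barrier events built from the antidiagonal edges together with backward-independence \eqref{assumption:busemann geodesics are independent of the weights behind it}. By stationarity \eqref{eq:stationarity} it suffices to treat $a=0$.

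\textbf{Step 1 (mass transport).} Let each vertex $x$ send unit mass to every vertex $\walk{x}(n)$ with $(\walk{x}(n)-x)\cdot\ed< T_{x,k}$. Since $\walk{x}$ is a geodesic and antidiagonal loops can be removed, I would first check that it visits each level $L_{x+t e_1}$ in a finite set of consecutive steps. Then the total mass sent by $x$ is at least $T_{x,k}$. I would prefer to record only the first visit to each level, so that the mass sent equals $T_{x,k}$ exactly. Stationarity \eqref{eq:stationarity} and the mass-transport principle on $\Z^2$ then give
\[
\E[T_{0,k}] = \E\bigl[\#\{x : \walk{x} \text{ visits } 0 \text{ at a level-distance } t=-x\cdot\ed<T_{x,k}\}\bigr] = \sum_{t\ge 0} \Prob(A_t).
\]
Here $A_t$ is the event that some $x\in L_{-t e_1}$ has $\walk{x}$ passing through $0$ before $\walk{x}$ merges with $\walk{x-k\ead}$ or with $\walk{x+k\ead}$.

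I would next use compatibility \eqref{eq:compatibility} and planarity to show that for each $t$ at most one such $x$ exists. Directed paths that do not backtrack in $\ed$ cannot cross without meeting. If two such $x,x'$ on $L_{-te_1}$ both reached $0$, then the walks from the intermediate lattice points $x+jk\ead$ would be squeezed between them. In particular a neighbor of $x$ would meet $\walk{x}$ before $0$, contradicting the definition of $A_t$.

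\textbf{Step 2 (reformulating $A_t$).} On $A_t$, the backward tree of $0$ restricted to level $-t$ contains a point $x$ whose walk is sandwiched at $0$. Its two neighbors' walks, started on $L_{-te_1}$ at antidiagonal distance $k$ on either side, stay disjoint from $\walk{x}$ up to level $0$. So $A_t\subseteq A'_t$, where $A'_t$ says: $0$ has an ancestor at level $-t$ whose $k$-neighbors on both sides remain separated from its walk for the full $t$ levels.

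\textbf{Step 3 (barrier events, the main obstacle).} I expect this to be the crux: showing that $\Prob(A'_t)$ is summable in $t$. Only that suffices for the claim, but I would aim for exponential decay. The plan is to define, for each level $s\in[-t,0]$, a local barrier event $B_s$ on a box of size $O(k)$ adjacent to the relevant walks. I would take the configuration of Lemma~\ref{lem:the edge configuration on Ba blocks geodesics} and Figure~\ref{fig:the box R}.
\begin{itemize}
\item Very large $e_1,e_2$ weights are allowed by the third assumption in \eqref{assumption}.
\item Small antidiagonal weights are allowed by the bound $c$ and the last assumption.
\end{itemize}
In this configuration any geodesic entering the box is forced to travel antidiagonally to a common exit point. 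Hence the three walks merge there, and in particular $\walk{x}$ merges with a neighbor. By Lemma~\ref{lem:finite-p2p-geo} the comparison of passage times across the box is legitimate.

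Backward-independence makes the walks from levels at or above $L_{s}$ independent of the weights strictly below $L_s$. This lets me reveal the configuration level by level, moving upward in blocks of height $O(k)$. I would argue that each block independently offers, with probability at least some $\delta>0$, a barrier that kills the event, which gives $\Prob(A'_t)\le (1-\delta)^{\lfloor t/Ck\rfloor}$. The delicate point is the order in which things are revealed. The barrier has to be placed at the location of the walks, and that location depends on the weights, so the revealing must be ordered so that the barrier weights are still fresh. I would do this by conditioning on the walks from level $s+Ck$ upward, which are independent of the block, and then planting the barrier along the backward tree. I would also need to confirm that the barrier actually alters the walks from below, which is where the geodesic property \eqref{walkisgeo} enters. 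Summing over $t$ then gives $\E[T_{0,k}]<\infty$.
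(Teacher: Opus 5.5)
\textbf{There is a genuine gap: Step 3, where all the content of the proof lies, does not work as proposed.}

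\textbf{Steps 1--2 are only a reformulation.} Run the same mass-transport principle one level at a time: let $x$ send mass only to its first visit to the level at distance exactly $t$ above it, provided $T_{x,k}>t$. This gives $\E[N_t]=\Prob(T_{\origin,k}>t)$, where $N_t$ counts the qualifying $x$ at level distance $t$ behind $\origin$. So your identity is $\E[T_{\origin,k}]=\sum_t\Prob(T_{\origin,k}>t)$ in disguise, and $\Prob(A_t)$ is comparable to $\Prob(T_{\origin,k}>t)$. Incidentally, ``at most one such $x$'' is false for $k\ge 2$: two qualifying points at antidiagonal distance less than $k$ have no $k$-neighbor squeezed between them, so you only get at most $k$ of them.

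\textbf{Step 3 claims more than is expected.} As formulated, it asserts exponential tails for $T_{\origin,k}$. In the solvable case, pairwise coalescence times of nearby geodesics have tails of order $t^{-2/3}$. The heuristic in the introduction predicts only a polynomial tail, of order $t^{-4/3}$, for the minimum.

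\textbf{The barrier mechanism you sketch cannot deliver such decay.} The configuration of \Lemref{lem:the edge configuration on Ba blocks geodesics} has size $O(k)$ and only forbids interior $e_1,e_2$ edges. A geodesic entering $B_a$ from below runs antidiagonally to one of the two low-weight edges at opposite ends of the box, or leaves sideways. Which way it goes is decided by the Busemann increments above, so the box does not funnel walks into a common exit and does not force merging. More fundamentally, the walks from $x$ and $x\pm k\ead$ spread apart. At a typical intermediate level they are much farther than $O(k)$ from each other, and no bounded box meets all three. So there is no uniform $\delta$ per block. You also cannot locate where these walks cross a block using only information independent of that block's weights.

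\textbf{The missing idea is to use the barrier to protect the middle walk, not to force coalescence.} Summability should then come from a lack-of-space count, not from a decay estimate.
\begin{enumerate}[label=(\roman*)]
\item Place $F_{a,k}$ on weights strictly below the antidiagonal through $a$, directly behind $a$. On $F_{a,k}$, a walk started at a lower level cannot reach $\walk{a}$ through $B_a$. It must first merge with $\walk{a-k\ead}$ or $\walk{a+k\ead}$.
\item Backward-independence then factorizes $\Prob(F_{a,k}\cap\{T_{a,k}>j\})=\Prob(F_{\origin,k})\Prob(T_{\origin,k}>j)$.
\item Take a box $R$ with $n$ levels and sum $1_{F_{a,k}}1_{\{T_{a,k}>2(n-i)\}}$ over the sites $a=i\ed+3kj\ead$. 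The spacing $3k$ keeps the barrier boxes disjoint.
\item Any two contributing walks coalesce outside $R$, so they exit $\partial R$ at distinct points and the sum is at most $6n$.
\item Taking expectations gives $\lfloor (n-1)/3k\rfloor\,\Prob(F_{\origin,k})\sum_{m=1}^{n}\Prob(T_{\origin,k}>2m)\le 6n$. This bounds $\sum_m\Prob(T_{\origin,k}>2m)$ uniformly in $n$, which is exactly the summability needed, with no rate of decay required.
\end{enumerate}
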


\begin{remark}
Theorem \ref{thm: at least one of pair of geodesics distance k apart expected to coalesce in finite time} continues to hold under the additional assumption that
$\Prob(c-1\le \w_{0,\pm\ead}\le c)=1$
for $c>1$. Formally, letting $c\to\infty$ suppresses the antidiagonal edges and recovers the case $E=\{e_1,e_2\}$. However, our upper bound \eqref{aux702} on $\E[T_{a,k}]$ diverges as $c\to\infty$. %
Consequently, this limiting argument does not yield $\E[T_{a,k}]<\infty$ for $E=\{e_1,e_2\}$.
\end{remark}

\begin{acknowledgement}
    AI did not contribute to either the formulation or proof of any of these results; they were first presented at an AMS Conference in 2018. We used an LLM to help us draw Figure \ref{fig:the box R}. 
\end{acknowledgement}

\section{Proofs} 
\subsection{Proof of Theorem \ref{thm:the coalescence time of three vertices has finite expectation}}
The setup of the Burton-Keane argument considers a box with side length $N$ in $\Z^2$. A triple of walks is simply three distinct walks emanating from three distinct points on the lattice. We say that a triple does not coalesce if no pair in the triple coalesces. The classical argument first assumes, for the sake of contradiction, that there exist $O(N^2)$ many triples of walks in the box that do not coalesce almost surely. As long as the outer two walks do not coalesce with the inner walk in a triple, they ``protect'' the inner walk from walks in other triples. In other words, protected inner walks in two distinct triples cannot coalesce. Thus, each protected inner walk must be associated with a unique point on the boundary of the box. However, there are only $O(N)$ many points on the boundary and $O(N^2)$ protected inner walks, and this shows that triples of walks where the outer walks do not coalesce with the inner walk almost surely do not occur. This is the classical Burton-Keane ``lack of space'' argument, which was used by Licea and Newman \citep{MR1387641} in their study of noncrossing geodesic families. 

We show that triples of walks cannot stay apart with no pair coalescing for \textit{too long}, by introducing a parameter representing the coalescence time into the Burton-Keane argument: for a triple that is $k$ steps away from the boundary, the protection from the outer walks only needs to last for $k$ steps to ensure that the inner walk is associated with a unique point on the boundary. Then, applying the Burton-Keane argument and taking expectation completes the proof. The rigorous version of this argument follows.

Let $L_k$ be the set of vertices of $\Z^2$ on the line parallel to $\ead$ that passes through the point $k \ed$. \arjun{Added `set of' here} Consider a box $R \subset \Z^2$ of vertices that has sides aligned with $\ed \AND \ead$, and $n$ vertices on each of its sides. Precisely,
\begin{equation}
    R = \{ i \ed + j \ead \colon i \in \{0,\ldots,n-1\},~ j \in \{0,\ldots,n-1\} \}.
    \label{eq:definition of rectangular box}
\end{equation}

Consider the two pairs of vertices $(a-e_1, a) \AND (a,a - e_2)$ for $a \in \Z^2$. Let 
\begin{equation*}
     E_{a,k} = \{ T_a > k \}
\end{equation*}
 be the event that the two pairs of walks from these three vertices do not coalesce for at least $k + 1$ steps. On this event, we must have
\[
    \arrow(a-e_1) = e_2 \AND \arrow(a - e_2) = e_1,
\]
since otherwise at least one pair of vertices involved in $E_{a,k}$ coalesces immediately and  $T_a = 1$. 

\begin{lemma}\label{lem:controlling the distance to coalescence}
    For distinct points $a$ and $b$, suppose $E_{a,k} \bigcap E_{b,r}$ occurs. Then on this event, either $\walk{a}$ takes at least $k$ steps or $\walk{b}$ takes at least $r$ steps before coalescing with the other. Hence,
    $\coal(a,b) \geq \min(r,k)$.
\end{lemma}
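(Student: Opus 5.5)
We argue by planarity and non-crossing in the setting $E=\{e_1,e_2\}$. First we note the basic geometric facts. Each walk $\walk{x}$ increases $x\cdot\ed$ by exactly one per step. By compatibility \eqref{eq:compatibility}, two walks that ever occupy the same vertex agree from then on. Hence two walks that have not yet coalesced are vertex-disjoint on every antidiagonal $L_j$ they both visit. Since steps are $e_1$ or $e_2$, the order along $L_j$ of the two walks' positions cannot swap without the walks meeting. On $E_{a,k}$ we have $\arrow(a-e_1)=e_2$ and $\arrow(a-e_2)=e_1$. So on the antidiagonal $L$ through $a$, the three walks from $a-e_1,a,a-e_2$ occupy $a-e_1+e_2$, $\walk{a}(1)$ and $a+e_1-e_2$. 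Moreover $\walk{a}$ stays strictly between $\walk{a-e_1}$ and $\walk{a-e_2}$ for its first $k$ steps past that line; this is what ``protection'' means.

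Next, assume without loss of generality that $a\cdot\ed\le b\cdot\ed$, and suppose for contradiction that $\walk{a}$ and $\walk{b}$ coalesce with $\walk{a}$ taking fewer than $k$ steps and $\walk{b}$ taking fewer than $r$ steps. Let $z=\coalpt{a,b}$, with $(z-a)\cdot\ed<k$ and $(z-b)\cdot\ed<r$. Consider where $b$ sits relative to the protected corridor of $a$ at the level $b\cdot\ed$.

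Case 1: $b$ lies strictly between $\walk{a-e_1}$ and $\walk{a-e_2}$ on its antidiagonal, and $b$ is not on $\walk{a}$. Then one of the walks from $b-e_1$ or $b-e_2$ also starts inside the corridor, or on its boundary. Those two walks fence $\walk{b}$ in for $r$ steps, while the corridor fences $\walk{a}$ for $k$ steps. For $\walk{a}$ and $\walk{b}$ to meet at $z$, the walk $\walk{a}$ must cross one of $b$'s fence walks $\walk{b-e_1}$, $\walk{b-e_2}$ before time $r$ (measured from $b$). Since walks cannot cross without meeting, that fence walk coalesces with $\walk{a}$. Then $\walk{a}$ equals that fence walk from that point on, and it merges with $\walk{b}$ at $z$ before $\walk{b}$'s protection expires. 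This contradicts $E_{b,r}$.

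Case 2: $b$ lies outside the corridor, or on a boundary walk. This case is symmetric: $\walk{b}$ must enter the corridor, so it meets $\walk{a-e_1}$ or $\walk{a-e_2}$ before reaching $z$, and that walk then coincides with $\walk{b}$ and hence with $\walk{a}$ at $z$. This gives $\coal(a,a-e_i)\le (z-a)\cdot\ed<k$, contradicting $E_{a,k}$. The case where $b$ lies on $\walk{a}$ itself (so $b=\walk{a}(j)$) is handled by noting that $\walk{b-e_1}$ and $\walk{b-e_2}$ flank $\walk{a}$ there. One of them is a walk from outside $\walk{a}$'s trajectory, which forces the same crossing contradiction. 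A careful bookkeeping of the steps then gives the claim, and $\coal(a,b)\ge\min(r,k)$ follows from \eqref{eq:coalescence time definition}.

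The main obstacle is the precise off-by-one accounting: we must verify that the meeting with a fence walk happens strictly within the protected time window, which requires the ordering facts on antidiagonals at the exact levels $a\cdot\ed$ and $b\cdot\ed$. We will handle this by working on each antidiagonal $L_j$ for $j$ between $b\cdot\ed$ and $z\cdot\ed$ and tracking the left-to-right order of the five relevant walks there.
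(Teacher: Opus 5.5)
Your proposal is correct and is essentially the paper's argument in discrete form: the two flanking walks shield the middle walk, non-crossing forces any walk coming from outside to merge with a flanking walk first, and one of $a,b$ must lie outside the other's protected region. You settle that last point by ordering $a\cdot\ed\le b\cdot\ed$, whereas the paper uses the open regions $M(a),M(b)$ and a short $\succ$-neighbor check.

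A few small repairs are needed:
\begin{itemize}
\item The subcase $b=\walk{a}(j)$, $j\ge1$, is simply vacuous. Here $\walk{a}(j-1)\in\{b-e_1,b-e_2\}$ would have its arrow pointing into $b$, contradicting $\arrow(b-e_1)=e_2$ and $\arrow(b-e_2)=e_1$ on $E_{b,r}$ for $r\ge1$. So the contradiction comes from the fence walk starting on $\walk{a}$, not the one off it, and no crossing is involved.
\item In Case 2 the correct bound is $\coal(a,a-e_i)\le (z-a)\cdot\ed+1\le k$, not $<k$. This still contradicts $T_a>k$.
\item On the antidiagonal through $a$, the middle walk sits at $a$, not at $\walk{a}(1)$.
\end{itemize}
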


\begin{proof}
    For $i=1 \OR 2$, consider the \textit{open} set $M(a,a-e_i) \subset \R^2$ formed as the interior of the region bounded by the following three continuous curves in $\R^2$: the first two curves are formed by joining the vertices in $\walk{a} \AND \walk{a-e_i}$ with straight line-segments up to their coalescence point $\coalpt{a,a-e_i}$, and the third is the line-segment between $a \AND a - e_i$. If $\coalpt{a,a-e_i} = \infty$, then $M(a,a-e_i)$ is an infinite open region bounded by the two walks $\walk{a} \AND \walk{a-e_i}$ and the line between $a$ and $a-e_i$. Let $M(a) = \bigcup_{i=1,2} M(a,a - e_i)$ be the union of the two regions associated with the triple of points $(a,a-e_1,a-e_2)$. 

    \oldnotes{Consider any point $x$ that is not inside $M(a)$. Then for $\walk{a}$ and $\walk{x}$ to coalesce, $\walk{x}$ must coalesce with either $\walk{a-e_1} \OR \walk{a-e_2}$ first, or only coalesce with $\walk{x}$ after $\walk{x}$ has coalesced with $\walk{a-e_i} \OR \walk{a-e_2}$.  Therefore their coalescence time is satisfies $\coal{a,b} \geq k$.}

    If $b\not\in M(a)$ (even if it is on the boundary of $M(a)$), then $\walk{b} \AND \walk{a}$ cannot coalesce before $\walk{a}$ has coalesced with at least one of $\walk{a-e_i}$. This is because there is a unique forward walk starting at each point. Consequently, on the event $E_{a,k}$, $\walk{a}$ must take at least $k$ steps before coalescing with $\walk{b}$, and thus $\coal(a,b)\ge k$.

    Similarly, if $a\not\in M(b)$ then $\coal(a,b)\ge r\ge\min(r,k)$.

    Assume now that $b\in M(a)$ and $a\in M(b)$. The first condition implies $\{b \succ a-e_1 \OR b \succ a-e_2\}$ and the second implies $\{a\succ b-e_1 \OR a\succ b-e_2\}$. These two in turn imply that $b$ is in $\{a\pm e_1,a\pm e_2,a+e_i-e_{3-i}, i=1,2\}$. (Recall that $a\ne b$.) 
But $a-e_i$ and $a+e_i-e_{3-i}$ are not inside $M(a)$, while if $b=a+e_i$ then $a$ is not inside $M(b)$. So in fact, $b\in M(a) \AND a\in M(b)$ cannot happen simultaneously. Thus, on the event $E_{a,k} \cap E_{b,r}$ either $\walk{a}$ takes at least $k$ steps or $\walk{b}$ takes at least $r$ steps before coalescing. The proof of the lemma is complete. 
\end{proof}

\begin{proof}[Proof of \Thmref{thm:the coalescence time of three vertices has finite expectation}]
    Consider the sum
    \[
        I := \sum_{k = 0}^{n-1} \sum_{a \in R \cap L_k} 1_{E_{a,2(n-k)}}.
    \]
    Suppose $a \in L_p$ and $b \in L_q$ are distinct points, and suppose both $E_{a,2(n-p)}$ and $E_{b,2(n-q)}$ occur. Then by \Lemref{lem:controlling the distance to coalescence}, we must have that either $\walk{a}$ takes at least $2(n-p)$ steps or $\walk{b}$ takes at least $2(n-q)$ steps before the two walks can coalesce. But this means that $\coalpt{ a,b }$ must lie outside of $R$. Let $\partial R$ denote the set of sites $i e_d+j e_{ad}\in R$ such that either $i \in \{0,n-1\}$ or $j \in \{0,n-1\}$ in \eqref{eq:definition of rectangular box}.   So $\walk{a}$ and $\walk{b}$ intersect the boundary $\partial R$, but their intersections $\walk{a} \cap \partial R$ and $\walk{b} \cap \partial R$ must be disjoint.  Since there are at most $3n$ points on $\partial R$ the walks can exit from, we must have
    \[
         I \leq 3n.
    \]
    Taking expectation and using the translation covariance of $E_{a,k}$, we get
    \begin{equation*}
        \E[I] = n \sum_{k=0}^{n-1} \Prob( E_{\origin,2(n-k)} ) \leq 3 n.
    \end{equation*}
    Since $E_{\origin,2(n-k)}=\{T_{\origin} > 2(n-k)\}$, reindexing the sum gives
    \[
        \sum_{k=1}^{n} \Prob( T_{\origin} > 2 k ) \leq 3.
    \]
    Since this bound is independent of $n$, letting $n \to \infty$ gives $\E[T_{\origin}/2] \leq \sum_{k \geq 0} \Prob( T_\origin/2 > k) < \infty$.
\end{proof}

\subsection{Proof of Theorem \ref{thm: at least one of pair of geodesics distance k apart expected to coalesce in finite time}}

\begin{figure}[!hb]
    \begin{centering}
\ifdefined\figRn\else\gdef\figRn{6}\fi
\ifdefined\figRk\else\gdef\figRk{1}\fi
\ifdefined\figRai\else\gdef\figRai{2}\fi
\ifdefined\figRaj\else\gdef\figRaj{3}\fi
\ifdefined\figRscale\else\gdef\figRscale{1.1}\fi
\begin{tikzpicture}[scale=\figRscale, rotate=-45]

  \pgfmathtruncatemacro{\n}{\figRn}
  \pgfmathtruncatemacro{\k}{\figRk}
  \pgfmathtruncatemacro{\ai}{\figRai}
  \pgfmathtruncatemacro{\aj}{\figRaj}
  \pgfmathtruncatemacro{\m}{\n-1}   %
  \pgfmathtruncatemacro{\mm}{\n-2}  %

  \tikzset{
    lattice/.style = {gray!40, very thin},
    site/.style    = {circle, fill, inner sep=1.0pt},
    side/.style    = {blue, very thick, line cap=round},
    heavy/.style   = {red, very thick, line cap=round},
    boundary/.style= {purple!50, thick, line join=round},
    geod/.style    = {green!50!black, thick, line join=round,
                      -{Straight Barb[length=3pt]}},
  }

  \def\Rboundary{
    (0,0) -- (\m,0)
    \foreach \i in {1,...,\m} { -- (\m-0.5,\i-0.5) -- (\m,\i) }
    -- (\m-0.5,\m+0.5) -- (0.5,\m+0.5) -- (0,\m)
    \foreach \i in {\m,...,1} { -- (0.5,\i-0.5) -- (0,\i-1) }
  }

  \begin{scope}
    \clip \Rboundary -- cycle;
    \foreach \y in {0,0.5,...,\n} { \draw[lattice] (-1,\y) -- (\n,\y); }
    \foreach \c in {-\n,...,\n} {
      \draw[lattice] (\c,0) -- (\c+\n+1,\n+1);
      \draw[lattice] (\c,0) -- (\c-\n-1,\n+1);
    }
  \end{scope}

  \draw[boundary] \Rboundary -- cycle;

  \foreach \i in {0,...,\m} {
    \foreach \j in {0,...,\mm} { \node[site] at (\j+0.5,\i+0.5) {}; }
    \foreach \j in {0,...,\m}  { \node[site] at (\j,\i) {}; }
  }

  \pgfmathtruncatemacro{\lo}{\aj-\k+1}
  \pgfmathtruncatemacro{\hi}{\aj+\k}
  \foreach \x in {\lo,...,\hi} {
    \draw[heavy] (\x-0.5,\ai-0.5) -- (\x,\ai);    %
    \draw[heavy] (\x-0.5,\ai-0.5) -- (\x-1,\ai);  %
  }
  \draw[side] (\aj-\k,\ai) -- (\aj+\k,\ai)
              -- (\aj+\k+0.5,\ai-0.5) -- (\aj-\k-0.5,\ai-0.5) -- cycle;
  \foreach \x in {-\k,...,\k} { \node[site, blue] at (\aj+\x,\ai) {}; }
  \pgfmathtruncatemacro{\kp}{\k+1}
  \foreach \x in {-\k,...,\kp} { \node[site, blue] at (\aj+\x-0.5,\ai-0.5) {}; }

  \def\Rexit#1#2{\pgfmathtruncatemacro{\nsteps}{#1 > \m-1 ? 1 : 2*(\m-#2)+2}}

  \pgfmathtruncatemacro{\cl}{\aj-\k}
  \Rexit{\cl}{\ai}
  \draw[geod] (\aj-\k,\ai) \foreach \s [evaluate=\s as \dx using {ifthenelse(mod(\s,2)==1,0.5,-0.5)}] in {1,...,\nsteps} { -- ++(\dx,0.5) };
  \pgfmathtruncatemacro{\cr}{\aj+\k}
  \Rexit{\cr}{\ai}
  \draw[geod] (\aj+\k,\ai) \foreach \s [evaluate=\s as \dx using {ifthenelse(mod(\s,2)==1,0.5,-0.5)}] in {1,...,\nsteps} { -- ++(\dx,0.5) };

  \pgfmathtruncatemacro{\twok}{2*\k}
  \draw[geod, -] (\aj,\ai) \foreach \s in {1,...,\twok} { -- ++(0.5,0.5) };
  \node[circle, fill=green!50!black, inner sep=1.4pt] at (\aj+\k,\ai+\k) {};

  \ifnum\k=1 \def\kead{\ead}\else \def\kead{\k\ead}\fi
  \tikzset{lab/.style = {fill=white, inner sep=0.5pt, font=\scriptsize}}
  \node[lab, below left=3pt]  at (\aj-0.5,\ai-0.5) {$B_a$};
  \node[lab, above=3pt]       at (\aj,\ai)         {$a$};
  \node[lab, above=3pt]       at (\aj-\k,\ai)      {$a-\kead$};
  \node[lab, below right=2pt] at (\aj+\k,\ai)      {$a+\kead$};

  \node[left=2pt]            at (0,0)         {$\origin$};
  \node[black!70]            at (\m+0.6,\m/3) {$\partial R$};

\end{tikzpicture} 
    \end{centering}
    \caption{The box $R$ from the proof of Theorem \ref{thm: at least one of pair of geodesics distance k apart expected to coalesce in finite time} for $n = \figRn$, together with the box $B_a$ at $a = \figRai \ed + \figRaj \ead$ with $k = \figRk$. The black dots are the vertices of $R$ and the purplish outline is (the joined-up version of) $\partial R$. In $B_a$, the red internal weights are large, and taking detours along the blue exterior has lower weight than taking even one red internal edge. The green paths sketch the geodesics from $a - k \ead$, $a$ and $a + k \ead$; the ones from $a$ and $a + k \ead$ coalesce at the green dot.}
\label{fig:the box R}
\end{figure}
We define weights in the box of vertices 
\begin{equation}
B_a := \left\{ a + j \ead \colon j \in \{-k,\ldots,k\} \right\} 
\bigcup \left\{ a + j \ead - e_1 \colon j \in \{ -k,\ldots,k+1 \} \right\}.
\end{equation}
that will protect the geodesic $\walk{a}$ from behind; other geodesics must first coalesce with $\walk{a \pm k \ead}$ before coalescing with $\walk{a}$. Let $\minw\in\R$ be such that
\begin{equation*}
    \Prob(\w_{0,e_i}\le\minw)>0,\quad i\in\{1,2\}.
\end{equation*}
Define the event $F_{a,k}$ as follows: take $M>(2k+1)c+\minw$ and require that
\begin{enumerate}
\item On the blue surrounding box, the edges from $a - k \ead - e_1$ to $a - k \ead$ and $a + k \ead - e_2$ to $a + k \ead$ have weights $\le\minw$. 
\item Any $e_1$ or $e_2$ edge in the interior of the box has weight $\w_e \in [M,\infty)$.
\end{enumerate}
By our assumptions on the weights, $\Prob(F_{a,k}) > 0$.

\begin{lemma}\label{lem:the edge configuration on Ba blocks geodesics}
   On $F_{a,k}$, no geodesic can take an interior edge in $B_a$  \textup{(see Figure \ref{fig:the box R})}.
\end{lemma}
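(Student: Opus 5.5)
The plan is a local replacement argument: if a geodesic uses an interior $e_1$ or $e_2$ edge of $B_a$, reroute it around the blue boundary at strictly smaller cost. First I would fix the geometry. Every vertex of $B_a$ is on the antidiagonal $L$ through $a$ (points $a+j\ead$, $|j|\le k$) or on the one just below it (points $a+j\ead-e_1$, $-k\le j\le k+1$). An interior $e_1$/$e_2$ edge joins a lower-row vertex $a+j\ead-e_1$ to an upper-row vertex $a+j\ead$ or $a+(j-1)\ead$ (an $e_1$ step from $a+j\ead-e_1$ gives $a+j\ead$, an $e_2$ step gives $a+(j-1)\ead$). These are the red edges, excluding the two designated corner edges from $a-k\ead-e_1$ and into $a+k\ead$.

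Next, suppose a finite geodesic $\Gamma$ uses a red edge $u\to v$, with $u$ in the lower row and $v$ in the upper row, so $\w_{u,v}\ge M$. I would reroute only this step: from $u$, walk along the lower antidiagonal to an endpoint of the lower row, take the low-weight corner edge (weight $\le\minw$) up to the upper row, then walk along the upper antidiagonal to $v$. Choose the side so the corner edge lies at that end: the edge $a-k\ead-e_1\to a-k\ead$ is at the $-\ead$ end, and the edge $a+k\ead-e_2\to a+k\ead$ at the other. Since $a+k\ead-e_2=a+(k+1)\ead-e_1$ is the last lower-row vertex, the right-end route needs $k+1-j$ lower antidiagonal steps plus at most $2k$ upper steps. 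I would count carefully so the total number of antidiagonal steps is at most $2k+1$, possibly by picking the better of the two sides. Each antidiagonal step costs at most $c$, so the detour costs at most $(2k+1)c+\minw<M\le\w_{u,v}$, a strict improvement, contradicting geodesicity.

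The main obstacle is that the rerouted path may revisit vertices of $\Gamma$, so it need not be a simple admissible path in the sense used for geodesics. I would handle this with the loop-removal remark in the paper. Antidiagonal weights are nonnegative, and any loop in an admissible path (every path moves weakly in $\ed$, so a loop at fixed $\ed$-level consists only of $\pm\ead$ steps) has nonnegative cost. Removing loops therefore does not increase the passage time and yields a strictly cheaper path between the same endpoints. Infinite geodesics are handled by applying the same argument to a finite segment containing the red edge.

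The remaining difficulty is the bookkeeping for the number of antidiagonal steps. If one side gives more than $2k+1$, I would compare both routes, or argue that a path entering the row near one end can exit through the nearer corner. I expect the threshold $M>(2k+1)c+\minw$ was chosen precisely to cover the worst case of this count.
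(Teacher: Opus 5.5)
Your argument is correct and is essentially the paper's proof: replace the heavy interior edge (weight $\ge M$) by a detour around the boundary of $B_a$ made of antidiagonal edges of weight at most $c$ and one corner edge of weight at most $\minw$, which is strictly cheaper since $M>(2k+1)c+\minw$. The bookkeeping you left open closes at once: for an interior edge from $a+j\ead-e_1$ to $a+j'\ead$ with $j'\in\{j,j-1\}$, the detours through the two corners use $2k+j+j'$ and $2k+1-j-j'$ antidiagonal steps, which sum to $4k+1$, so the better one uses at most $2k$. Your loop-removal step is harmless but unnecessary, since admissible paths need not be self-avoiding and any strictly cheaper admissible path between the same endpoints already contradicts minimality.
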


\begin{proof}
   Suppose a geodesic starts on the south boundary and takes an edge in the interior. Then it necessarily meets the north boundary of $B_a$ at some point $a + j \ead$, and the weight of this edge is at least $M$. Since $B_a$ has width $1$ in the $\ed$ direction, any alternate path that goes entirely along the edges of the box uses at most $2k+1$ antidiagonal edges, each of weight at most $c$, together with a single boundary $e_1$ or $e_2$ edge, whose weight is at most $\minw$. Such a path therefore has weight at most $(2k+1)c+\minw$. Since $M > (2 k + 1) c + \minw$, the alternative route around the boundary has lower weight, and so no such geodesic can exist. 
\end{proof}

\begin{proof}[Proof of \Thmref{thm: at least one of pair of geodesics distance k apart expected to coalesce in finite time}]
    To account for the presence of $\ead$ edges, let 
    \begin{multline}
        R = \{i \ed + j \ead \colon 0 \leq i \leq n-1, 0 \leq j \leq n-1\} \\
        \bigcup \{i \ed + j \ead + e_1 \colon 0 \leq i \leq n-2, 0 \leq j \leq n-2\}
    \end{multline}
    be a square box of vertices (see \Figref{fig:the box R}). Again, define $E_{a,j} = \{T_{a,k} > j\}$. Consider as before 
    \[
        I := \sum_{i = 0}^{n-1} \sum_{j = 1}^{\lfloor (n-1)/3k \rfloor} 1_{F_{i \ed + 3 k j \ead,k}} 1_{E_{i \ed + 3 k j \ead,2(n-i)}}.
    \]
   \begin{claim}\label{claim: the distance to coalescence}
        Suppose $a,b \in R$ are distinct points of the form $p \ed + 3k j\ead$ and $q \ed + 3k j'\ead$, and suppose $E_{a,2(n-p)}$, $F_{a,k}, E_{b,2(n-q)}$ and $F_{b,k}$ occur. Then on this event, either $\walk{a}$ takes at least $2(n-p)$ $e_1$ or $e_2$ steps or $\walk{b}$ takes at least $2(n-q)$ $e_1$ or $e_2$ steps before coalescing with the other. Hence,
        \[\coal(a,b) \geq \min(2(n-p),2(n-q)).\]
    \end{claim}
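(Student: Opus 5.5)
We mirror the proof of \Lemref{lem:controlling the distance to coalescence}, with the triple $(a-k\ead,a,a+k\ead)$ in place of $(a-e_1,a,a-e_2)$, and with the barrier event $F_{a,k}$ doing the work that the planar geometry of adjacent sites did there.

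First we define the protected region. For $\sigma\in\{+,-\}$, let $M(a,a\sigma k\ead)$ be the open region bounded by three curves: the polygonal curves of $\walk{a}$ and $\walk{a\sigma k\ead}$ up to their coalescence point (or the whole walks if they do not coalesce), and the segment of $L_a$ joining $a$ to $a\sigma k\ead$. Set $M(a)=M(a,a-k\ead)\cup M(a,a+k\ead)\cup B_a^\circ$, where $B_a^\circ$ is the interior of the box $B_a$. Admissible paths never decrease $x\cdot\ed$, and antidiagonal loops can be removed. So a walk $\walk{x}$ with $x\notin M(a)$ can enter $M(a)$ only in two ways: by crossing one of the outer walks $\walk{a\pm k\ead}$, or by entering through the segment of $L_a$ from behind.

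Here we use \Lemref{lem:the edge configuration on Ba blocks geodesics}. On $F_{a,k}$, any geodesic arriving from strictly below $L_a$ near $a$ must pass through the strip $B_a$. It cannot use an interior $e_1/e_2$ edge of $B_a$, so it must travel along the blue boundary. The only exits from the strip to $L_a$ are then at its two ends, $a\pm k\ead$, or beyond them. Hence a walk from outside reaches the segment $\{a+j\ead:|j|\le k\}$ only through the endpoints $a\pm k\ead$, or by merging with $\walk{a\pm k\ead}$. Walks are compatible: a unique forward walk leaves each site. Therefore such a walk coincides with $\walk{a\pm k\ead}$ from that point on. It follows that if $b\notin M(a)$, then $\walk{b}$ cannot coalesce with $\walk{a}$ before $\walk{a}$ has coalesced with one of $\walk{a\pm k\ead}$. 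On $E_{a,2(n-p)}$, $\walk{a}$ has then already made at least $2(n-p)$ steps in the $\ed$-coordinate, that is, $2(n-p)$ $e_1$ or $e_2$ steps, so $\coal(a,b)\ge 2(n-p)$. The case $a\notin M(b)$ is symmetric and gives $\coal(a,b)\ge 2(n-q)$.

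It remains to rule out $b\in M(a)$ and $a\in M(b)$ simultaneously, and this is where the spacing $3k$ enters. If $p=q$, then $a$ and $b$ lie on the same antidiagonal line, at distance at least $3k$ in $\ead$-steps. Since the regions $M(\cdot)$ lie on or above their base lines, the only points of $L_a$ in $\overline{M(a)}$ are those of the segment of width $2k$ around $a$. So $b\notin M(a)$. If $p<q$, then $a\notin M(b)$, because $M(b)$ contains no points strictly below $L_b$ apart from those in $B_b$. The set $B_b$ lies only one $e_1$-step behind $L_b$, and its interior contains no lattice site of the required form $p\ed+3kj\ead$ other than points on $L_b$. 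The case $p>q$ is symmetric.

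The main obstacle is the middle step: making rigorous that the barrier forces walks from behind to enter through $a\pm k\ead$. Antidiagonal moves let a geodesic slide along $L_a$ or along the row $L_a-e_1$. We must check that sliding along $L_a-e_1$ and then stepping up via the boundary edges lands exactly on $a\pm k\ead$, or outside the segment. We must also check that sliding along $L_a$ itself from outside the segment hits $\walk{a\pm k\ead}$ first. This second point uses that $\walk{a\pm k\ead}$ starts at the endpoints and that admissible paths cannot cross without meeting. Together these confirm that the endpoints act as gates.
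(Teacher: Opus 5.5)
Your argument is essentially the paper's: a protected region bounded by the lower sides of $B_a$ and the outer walks $\walk{a\pm k\ead}$, Lemma~\ref{lem:the edge configuration on Ba blocks geodesics} making $a\pm k\ead$ the only gates, compatibility and planarity forcing an outside walk to merge with an outer walk first, and the level/spacing comparison to put one point outside the other's region. The paper does this more directly: it assumes $p\le q$ and shows $a\notin M(b)$, instead of ruling out $b\in M(a)$ and $a\in M(b)$ simultaneously.

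One correction: your claim that $\overline{M(a)}\cap L_a$ is only the $2k$-segment is false, because $\walk{a\pm k\ead}$ may first run along $L_a$. So $b=a+3k(j'-j)\ead$ can lie on an outer walk. This is harmless, since the open region misses $L_a$ entirely and $\walk{b}$ is then a tail of that outer walk; it is exactly the ``worst case'' the paper singles out.
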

    
    \begin{proof}
        The proof is analogous to that of Lemma \ref{lem:controlling the distance to coalescence}. Define $M(a)$ as the open region formed by the $e_1$, $e_2$, and southwestern $\ead$-aligned parts of the blue boundary of the box $B_a$, and the walks $\walk{a-k\ead}$ and $\walk{a+k\ead}$ (with line segments connecting the vertices) all the way up to the coalescence point $\coalpt{a-k\ead,a+k\ead}$ (or taken to be the corresponding infinite open region if they do not coalesce). Define $M(b)$ similarly. See Figure \ref{fig:the box R}. 

       Without loss of generality, assume that $p \leq q < n$. Assume we are on the intersection of the events $E_{a,2(n-p)}$, $F_{a,k}, E_{b,2(n-q)}$ and $F_{b,k}$. If $p < q$, then $a \not\in M(b)$ since every point in $M(b)$ has $\ed$-coordinate strictly larger than $p$. If $p = q$, then $a$ could in the worst case lie on $\walk{b\pm k \ead}$, in which case it is on the boundary of $M(b)$; again $a \not\in M(b)$. The blocking event $F_{b,k}$ along with Lemma \ref{lem:the edge configuration on Ba blocks geodesics} shows that $\walk{a}$ cannot enter through the boundary of the box $B_b$ into its interior and must thus coalesce with either $\walk{b-k\ead}$ or $\walk{b+k\ead}$ before being able to coalesce with $\walk{b}$. The event $E_{b,2(n-q)}$ ensures that $\walk{b}$ takes at least $2(n-q)$ $e_1 \OR e_2$ steps before coalescing with either $\walk{b-k\ead}$ or $\walk{b+k\ead}$. This proves the claim.   
    \end{proof}
    
    Claim \ref{claim: the distance to coalescence} implies that $\coalpt{ a,b }$ must lie outside of $R$. As in the proof of Theorem \ref{thm:the coalescence time of three vertices has finite expectation}, each walk intersects $\partial R$, and the sets of boundary points visited by $\walk{a}$ and $\walk{b}$ are disjoint.  Therefore, choosing one such boundary point for each walk gives an injection into $\partial R$. Since there are at most $6n$ points on $\partial R$ through which the geodesics can exit, we must have $I \leq 6n$.

    Taking expectation and using the translation covariance of the events together with the independence of $E_{a,i}$ and $F_{a,k}$ by \eqref{assumption:busemann geodesics are independent of the weights behind it}, we get
    \begin{equation*}
        \E[I] = \lfloor (n-1)/3k \rfloor \Prob(F_{\origin,k}) \sum_{i=0}^{ n-1 } \Prob( E_{\origin,2(n-i) } ) \leq 6n.
    \end{equation*}
    Since $E_{\origin,2(n-i)}$ means $\{T_{\origin,k} > 2(n-i)\}$, we get for $n$ large enough,
    \begin{align}\label{aux702}
        \sum_{i=0}^{ n-1 } \Prob\left( T_{\origin,k} > 2 (n - i ) \right) \leq \frac{C}{\Prob(F_{\origin,k})}
    \end{align}
    for some constant $C$ that can be chosen to be close to $18k$. Reindexing by $m = n - i$ and letting $n \to \infty$ as in the proof of Theorem \ref{thm:the coalescence time of three vertices has finite expectation} shows
    \[\E[T_{\origin,k}/2] \leq \sum_{m=0}^\infty \Prob(T_{0,k}/2 > m) < \infty. \qedhere\]
\end{proof}

\appendix
\section{Finiteness of passage times and geodesics}
\begin{lemma}\label{lem:finite-p2p-geo}
Let $E=\{e_1,e_2,\pm\ead\}$ and $\bar\w_x=\{\w_{x,x+e}:e\in E\}$, $x\in\Z^2$. 
Suppose that $\{\bar\w_x:x\in\Z^2\}$ are 
i.i.d.\ and $\E[(\w_{0,e_i})^-]<\infty$, $i\in\{1,2\}$. Assume that
$\w_{x,x\pm\ead}\ge0$, $\Prob$-almost surely,
and $\Prob(\w_{0,\ead}+\w_{0,-\ead}>0)>0$.
 Then, $\Prob$-almost surely, simultaneously for
all $x,y\in\Z^2$ with $x\prec y$, the first-passage time $G(x,y)$ is finite and attained by a finite admissible path.
\end{lemma}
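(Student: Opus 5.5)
The plan is to show that, almost surely and simultaneously for all $x\prec y$, every admissible path from $x$ to $y$ whose passage time is not too large stays in a fixed finite set of vertices. Then the infimum in \eqref{eq:passage time definition} is, in effect, taken over paths through a finite vertex set. We may also remove antidiagonal loops: since the weights $\w_{x,x\pm\ead}$ are nonnegative, deleting a closed sub-loop never increases the passage time. Note also that $G(x,y)<+\infty$ trivially, since $x\prec y$ gives at least one admissible path.

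First, reduce to loop-free paths. Any admissible path visits each level $L_m=\{z:z\cdot\ed=m\}$ in a single contiguous block, because only $e_1,e_2$ steps increase $z\cdot\ed$ and they increase it by exactly one. Within a level, the path moves by $\pm\ead$ steps. If such a block revisits a vertex, the loop between the two visits consists only of antidiagonal edges and can be deleted without increasing $W$. So it suffices to consider paths that are monotone in the antidiagonal coordinate within each level. Such a path is determined by its sequence of $e_1/e_2$ steps (there are $(y-x)\cdot\ed$ of them) together with the antidiagonal positions at which it enters and exits each level. For fixed endpoints $x_m,x'_m$ on a level, the only freedom in that level is the pair of entry and exit points, so the set of such paths with all excursions confined to a bounded window is finite. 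Thus what remains is to show that paths making long antidiagonal excursions have large passage time.

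Second, do the key estimate. Set $s=\w_{0,\ead}+\w_{0,-\ead}$; by the last assumption, $\E[\min(s,1)]>0$. I would apply the ergodic theorem, or the strong law plus Borel--Cantelli for i.i.d.\ sums, along each antidiagonal line, uniformly over starting points in a window of polynomial size. The goal is that almost surely, for each fixed level and direction, a monotone run of length $\ell$ starting within distance $R$ of the origin costs at least $\delta\ell$ once $\ell\ge C(R)$.

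Caution is needed here, because a monotone run uses edges in only one direction, $+\ead$ or $-\ead$. The positivity assumption only concerns the sum $s$, so we must handle the case where one of the two directions has zero weight a.s. I expect this to be the main obstacle. A path on a level that moves far in the cheap direction must come back eventually, either on that level (which is not allowed after loop removal) or on later levels, to reach $y$. So the total antidiagonal displacement traversed in each direction is controlled, and the costly direction must be traversed on some level. The two directions can be paired up across levels: any displacement $D$ away from the segment between $x$ and $y$ must be undone, which yields roughly $D$ edges in each direction at comparable antidiagonal positions. Their combined cost is then at least $\delta D$ by the ergodic theorem applied to $s$ over the finitely many levels between $x$ and $y$.

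Third, handle the negative weights. The $e_1,e_2$ steps number exactly $(y-x)\cdot\ed$, and a step taken at antidiagonal distance at most $D$ from the segment has negative part bounded by $\max_{|z|\le D}(\w_{z,z+e_i})^-$, maximized over the finitely many relevant levels. Since $\E[(\w_{0,e_i})^-]<\infty$, Borel--Cantelli gives that this maximum is $o(D)$ almost surely. Hence for large $D$ the path costs at least $\delta D-o(D)-C$, which exceeds the cost of any fixed reference path from $x$ to $y$. Countably many pairs $(x,y)$ give the simultaneous statement. Finally, the restricted minimization is over finitely many loop-free paths, so the infimum is attained and is finite (it is not $-\infty$).
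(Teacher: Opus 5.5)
Your plan is essentially the paper's proof. Both reduce to paths that are monotone on each antidiagonal level, apply a per-level law of large numbers to the truncated antidiagonal weights, and use a displacement count to force many steps in both antidiagonal directions (the paper shows $|H_+-H_-|\le D$, where $H_\pm$ are the numbers of $\pm\ead$ steps, hence $H_\pm\ge (H-D)/2$). Both then use Borel--Cantelli to make the negative parts of the $e_i$ weights $o(R)$.

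One small correction concerns the paired bound. The $+\ead$ and $-\ead$ edges you pair lie on different levels, since pairing them on the same level would create a loop. So the bound cannot come from the ergodic theorem for $s=\w_{0,\ead}+\w_{0,-\ead}$. Instead, apply the law of large numbers to each direction separately, with $\mu_\pm=\E[\w_{0,\pm\ead}\wedge 1]$, and use $\mu_+H_++\mu_-H_-\ge(\mu_++\mu_-)\min(H_+,H_-)$. This needs no matching of antidiagonal positions.
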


The intuition for the lemma was already provided in the introduction; we repeat it here for the reader's convenience. Since the antidiagonal edges have positive mean, the law of large numbers implies that the passage time along antidiagonal lines increases linearly in the number of steps. For any two fixed $x \prec y$, the total number of $e_1$ and $e_2$ edges on any allowable path is fixed and equal to $|x - y|_1$. Since $\E[ (\omega_{0,e_i})^-] < \infty$, the negative contributions from the $e_1$ and $e_2$ edges cannot compensate for the large positive linear growth of the antidiagonal lines passage time. Therefore, paths that have too many antidiagonal edges can be ignored, and only finitely many paths between $x$ and $y$ need to be considered when computing the first-passage time. This implies that the passage-time is finite and a geodesic exists. 

\begin{proof}
It is enough to prove the claim for an arbitrary pair $x\prec y$. Fix such a pair and set $N=(y-x)\cdot\ed$.
Every admissible path from $x$ to $y$ contains exactly $N$ edges in $\{e_1,e_2\}$.

Since the weights on antidiagonal edges are nonnegative, it suffices to consider self-avoiding paths.

For $s,r\in\Z$, write $v(s,r)=(r,s-r)\in\Z^2$.
Then $v(s,r)\cdot\ed=s$, and $v(s,r)+\ead=v(s,r+1)$.
Thus $r$ parametrizes the antidiagonal line of level $s$.

Define
\[A^+_{s,r}=\w_{v(s,r),v(s,r)+\ead}\wedge1
\quad\text{and}\quad
A^-_{s,r}=\w_{v(s,r),v(s,r)-\ead}\wedge1,\]
and set
\[\mu_+=\E[A^+_{0,0}]\quad\text{and}\quad\mu_-=\E[A^-_{0,0}].\]
By the assumptions on antidiagonal weights we have
$\mu_++\mu_->0$.

For each fixed $s$, the variables $\{A^\pm_{s,r}:r\in\Z\}$ are i.i.d.\ in $r$. Hence the strong law of large numbers gives
\[\max_{|r|\le R}
\Bigl|\sum_{j=0}^{r-1}(A^+_{s,j}-\mu_+)\Bigr|=o(R)\quad\text{and}\quad
\max_{|r|\le R}
\Bigl|\sum_{j=1}^{r}(A^-_{s,j}-\mu_-)\Bigr|=o(R),\]
$\Prob$-almost surely as
$R\to\infty$, with the usual interpretation of the sum for negative $r$.
Consequently, uniformly over all integers $u<v$ with
$|u|,|v|\le R$,
\begin{align}\label{aux748}
\sum_{r=u}^{v-1} A^+_{s,r}\ge \mu_+(v-u)-o(R)\quad\text{and}\quad
\sum_{r=u+1}^{v} A^-_{s,r}\ge \mu_-(v-u)-o(R).
\end{align}
Only the finitely many levels $s\in\{x\cdot\ed,\ldots,y\cdot\ed\}$
can be visited by an admissible path from $x$ to $y$, so these estimates hold uniformly over these $s$.

Given a self-avoiding admissible path $\gamma$, let $H_+(\gamma)$ and $H_-(\gamma)$ be the numbers of $+\ead$ and $-\ead$ steps in $\gamma$, respectively. Set $H(\gamma)=H_+(\gamma)+H_-(\gamma)$.

Let $\gamma$ be a self-avoiding admissible path from $x$ to $y$.
On each antidiagonal level, the antidiagonal portion of $\gamma$ is
monotone: if it changed direction, it would revisit a vertex. Hence, summing the bounds in \eqref{aux748} over the finitely many
levels visited by $\gamma$ gives 
\begin{align}\label{aux763}
W_{\rm ad}(\gamma)\ge\mu_+H_+(\gamma)+\mu_-H_-(\gamma) -o(H(\gamma)),
\end{align}
uniformly over all such $\gamma$, where $W_{\rm ad}(\gamma)$ denotes the contribution of the antidiagonal edge weights.

Let $N_1$ denote the number of $e_1$-steps in $\gamma$. Comparing first coordinates gives
$y_1-x_1=H_+(\gamma)-H_-(\gamma)+N_1$. Since $0\le N_1\le N$,
\[|H_+(\gamma)-H_-(\gamma)|\le |y_1-x_1|+N=:D.\]
Therefore
\[H_+(\gamma)\ge\frac{H(\gamma)-D}{2}\quad\text{and}\quad
H_-(\gamma)\ge\frac{H(\gamma)-D}{2},\]
and \eqref{aux763} yields
\begin{align}\label{aux775}
W_{\rm ad}(\gamma)\ge\frac{\mu_++\mu_-}{2}(H(\gamma)-D)-o(H(\gamma)).
\end{align}

We next control the possible negative contribution of the $e_1$ and $e_2$ edges. 
Since there are $O(2^m)$ candidate $e_i$ edges with antidiagonal coordinate $|r|\le 2^m$ on the finitely many relevant levels, a union bound gives
\[\Prob\biggl(
\max_{\substack{x\cdot\ed\le s\le y\cdot\ed\\ |r|\le 2^m}}(\w_{v(s,r),v(s,r)+e_i})^-
>\epsilon 2^m\biggr)
\le C_{x,y}\,2^m
\Prob\bigl((\w_{0,e_i})^->\epsilon2^m\bigr),\]
where $C_{x,y}$ is a finite positive constant.
By the integrability of $(\w_{0,e_i})^-$,
\[\sum_{m=1}^\infty
2^m\Prob\bigl((\w_{0,e_i})^->\epsilon 2^m\bigr)
\le \frac2\epsilon\E[(\w_{0,e_i})^-]<\infty.\]
Hence, by the Borel-Cantelli lemma, $\Prob$-almost surely,
\[\max_{\substack{x\cdot\ed\le s\le y\cdot\ed\\ |r|\le 2^m}}
(\w_{v(s,r),v(s,r)+e_i})^-
\le \epsilon 2^m\]
for all sufficiently large $m$. Since $\epsilon>0$ is arbitrary, it follows that
\begin{align}\label{aux798}
\max_{\substack{x\cdot\ed\le s\le y\cdot\ed\\ |r|\le R}}(\w_{v(s,r),v(s,r)+e_i})^-=o(R)
\qquad\Prob\text{-a.s.}
\end{align}

Every vertex of a self-avoiding admissible path $\gamma$ from $x$ to $y$ with $H$ antidiagonal steps has antidiagonal coordinate bounded in absolute value by
$H+C'_{x,y}$ for a deterministic constant $C'_{x,y}$. Since $\gamma$ has exactly $N$ ``forward'' edges $e_i$, $i\in\{1,2\}$, \eqref{aux798} gives 
\begin{align}\label{aux805}
W_{\rm f}(\gamma)\ge-o(H(\gamma)),
\end{align}
where $W_{\rm f}(\gamma)$ denotes the contribution of the $e_1$ and $e_2$ edges.

Combining \eqref{aux775} and \eqref{aux805},
\[W(\gamma)\ge
\frac{\mu_++\mu_-}{2}H(\gamma)-o(H(\gamma))-O_{x,y}(1).\]
Since $\mu_++\mu_->0$, we have almost surely
\begin{align}\label{aux817}
\inf\bigl\{
W(\gamma):
\gamma\text{ self-avoiding from $x$ to $y$},\ H(\gamma)\ge m
\bigr\}\longrightarrow\infty
\quad\text{as }m\to\infty.
\end{align}

Fix any finite admissible path $\gamma_0$ from $x$ to $y$. By \eqref{aux817},
there exists a finite random $m_0$ such that $W(\gamma)>W(\gamma_0)$ for all admissible paths $\gamma$ from $x$ to $y$ with $H(\gamma)\ge m_0$.
Hence the infimum defining $G(x,y)$ may be restricted to self-avoiding paths with fewer than $m_0$ antidiagonal edges. Every such path has at most $N+m_0$ edges, so there are only finitely many
of them. Consequently, $G(x,y)\in\R$ 
and the infimum is attained by one of these finitely many paths.
\end{proof}

\oldnotes{
\begin{claim}
    The event $E_{k,r}$ only depends on $\future{a - e_2} \cup \future{a - 2 e_2}$. In particular, it is independent of $\w_{a -2 e_2}$.
\end{claim}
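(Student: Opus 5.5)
The plan is to reduce the claim to a measurability statement and then apply backward-independence \eqref{assumption:busemann geodesics are independent of the weights behind it} once. Since the notation is not fixed in the text, I read $\future{x}$ as the $\sigma$-algebra generated by the walks $\{\walk{z}\colon (z-x)\cdot\ed\ge 0\}$, that is, by the walks started on or above the antidiagonal line $L_x$. I read $E_{k,r}$ as the event $E_{a,r}=\{T_a>r\}$ attached to the triple $(a,a-e_1,a-e_2)$. If $\future{x}$ is instead meant to be the weights on or above $L_x$ together with the augmenting randomness, the same argument applies verbatim.

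\textbf{Step 1 (measurability).} The event $E_{a,r}$ is a function of the three walks $\walk{a}$, $\walk{a-e_1}$, $\walk{a-e_2}$ only. Indeed, by \eqref{eq:coalescence time definition}, $\coal(a,a-e_i)$ is read off from the coalescence point $\coalpt{a,a-e_i}$, which is determined by the two paths alone. Their starting levels are $a\cdot\ed$ and $a\cdot\ed-1$, and both are $\ge (a-e_2)\cdot\ed$. Hence all three walks are among the generators of $\future{a-e_2}$, and so $E_{a,r}\in\future{a-e_2}$. Since $L_{a-2e_2}$ lies strictly below $L_{a-e_2}$, we have $\future{a-e_2}\subset\future{a-2e_2}$. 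So the union in the claim adds nothing, and it is harmless to state the result with it.

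\textbf{Step 2 (independence).} I apply \eqref{assumption:busemann geodesics are independent of the weights behind it} with the base point $a-e_2$. This gives that the family $\{\walk{z}\colon (z-(a-e_2))\cdot\ed\ge 0\}$, and hence $\future{a-e_2}$, is independent of all weights strictly below $L_{a-e_2}$. The vertex $a-2e_2$ has level $(a-e_2)\cdot\ed-1$, so its outgoing weights $\bar\w_{a-2e_2}$ lie strictly below $L_{a-e_2}$. Therefore $E_{a,r}$ is independent of $\bar\w_{a-2e_2}$.

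\textbf{Main obstacle.} The delicate point is the precise meaning of $\future{\cdot}$. If it means the walks emanating from $a-2e_2$ and above, then Step 1 fails as stated: the walk $\walk{a-2e_2}$ itself depends on $\w_{a-2e_2}$, because its first arrow \eqref{eq:arrow definition} does. In that reading one must check that $E_{a,r}$ does not actually use $\walk{a-2e_2}$. This follows from compatibility \eqref{eq:compatibility}, which makes each walk determined by its own starting point and the arrows along it. The three relevant walks start at level $\ge (a-e_2)\cdot\ed$ and never descend in the $\ed$-direction, so they see only arrows at those levels, and those arrows are in turn independent of $\bar\w_{a-2e_2}$ by \eqref{assumption:busemann geodesics are independent of the weights behind it}.
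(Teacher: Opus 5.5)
The paper gives no proof of this claim. It is a leftover from discarded notes in which neither $\future{\cdot}$ nor $E_{k,r}$ is ever defined. The only nearby justification is the remark that the coalescence event depends only on Busemann increments in the futures (forward cones) $\future{x}\cup\future{x-ke_1}\cup\future{x-ke_2}$. That remark rests on the extra hypothesis that $B(x,y)$ is a measurable function of the weights in $\future{x}\cup\future{y}$, which the authors flag as probably needing differentiability of the shape function.

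Under your stated reading (walk-generated $\sigma$-algebras over half-planes, triple $(a,a-e_1,a-e_2)$), your argument is correct. Step 1 holds by definition, and Step 2 is a single application of backward-independence (d) at base point $a-e_2$.

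Your route differs from the one the notes point to: you replace forward-cone measurability of Busemann increments by half-plane independence. This makes the argument more robust. Property (d) holds for the generalized Busemann walks with no regularity assumption on the shape function. It is also exactly how the paper obtains independence of $E_{a,i}$ and $F_{a,k}$ in the proof of Theorem~\ref{thm: at least one of pair of geodesics distance k apart expected to coalesce in finite time}.

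Your reading also makes the first sentence of the claim true, because the half-planes are nested. With forward cones it would not be, for this triple: $\walk{a-e_1}$ involves $\bar\w_{a-e_1}$ through its first arrow \eqref{eq:arrow definition}, and $a-e_1$ lies in neither $\future{a-e_2}$ nor $\future{a-2e_2}$. What cone measurability would add is independence from weights on or above $L_{a-e_2}$ lying outside the cones. That is irrelevant for $\bar\w_{a-2e_2}$.

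You are right to make the extra hypothesis explicit. Stationarity and compatibility, which are all that Theorem~\ref{thm:the coalescence time of three vertices has finite expectation} assumes, give no such independence.

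Two minor points:
\begin{itemize}
\item Your last paragraph is unnecessary. $E_{a,r}$ is by definition a function of $\walk{a},\walk{a-e_1},\walk{a-e_2}$, so compatibility plays no role.
\item The ``verbatim'' aside is loose. The full augmenting randomness is correlated with the weights below $L_x$, so in that reading the whole $\sigma$-algebra $\future{a-e_2}$ is not independent of $\bar\w_{a-2e_2}$. The conclusion still holds, because Step 2 only uses independence of the walks themselves.
\end{itemize}
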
 

$A_{r,k}$ will be an event in a small wedge shaped box behind the vertices involved in $E_{r,k}$. The event $A_{k,r}$ is defined on Since $E_{r,k}$ is independent of the weights behind the three vertices involved in the event, it is independent of $A_{r,k}$. $A_{r,k}$ is defined on a square box of side-length $2$ that includes the vertices $V := \{ a - i e_1 - j e_2 \}_{i,j = 1,2} \setminus \{a, a - e_2\}$.

\begin{claim}
    The event $E_{k,r}$ only depends on $\future{a - e_2} \cup \future{a - 2 e_2}$. In particular, it is independent of $\w_{a -2 e_2}$.
\end{claim}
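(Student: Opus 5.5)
The plan is to reduce the claim to the backward-independence property \eqref{assumption:busemann geodesics are independent of the weights behind it}, after two preliminary steps. First, make precise that $\future{x}$ denotes the set of vertices strictly ahead of $x$, namely $\{z\colon x\prec z,\ z\ne x\}$. Second, show that $E_{k,r}$ is measurable with respect to the $\sigma$-algebra generated by the walks $\{\walk{z}\colon z\in\future{a-e_2}\cup\future{a-2e_2}\}$. The ``in particular'' part then follows from a single application of \eqref{assumption:busemann geodesics are independent of the weights behind it} along a suitably chosen antidiagonal line.

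\textbf{Step 1 (measurability).} Write $E_{k,r}$ as a statement about coalescence of the walks emanating from the vertices of the triple, in the same way as $E_{a,k}$ was written in the proof of Theorem \ref{thm:the coalescence time of three vertices has finite expectation}. By compatibility \eqref{eq:compatibility}, $\walk{x}(j+1)=\walk{\walk{x}(1)}(j)$ for every $x$. Hence every coalescence time $\coal(x,y)$ entering $E_{k,r}$ is a function of the walks started at the successors of $x$ and $y$. Each such successor lies in $\future{a-e_2}\cup\future{a-2e_2}$. The one exception is that the first steps from the two rear vertices must themselves be recorded, as was done for $E_{a,k}$ via the forced arrows $\arrow(a-e_1)=e_2$ and $\arrow(a-e_2)=e_1$. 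I will encode the required first step from $a-2e_2$ as the condition that its successor's walk agrees with the prescribed neighbour's walk. This condition is an event about $\walk{z}$ with $z\in\future{a-2e_2}$, which yields the first assertion.

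\textbf{Step 2 (independence).} Every $z\in\future{a-e_2}\cup\future{a-2e_2}$ satisfies
\[
(z-(a-2e_2))\cdot\ed\ \ge 1,
\]
so $z$ lies on or above the antidiagonal line $L_{a-2e_2+e_1}$, which passes through $a-e_2$'s level. The vertex $a-2e_2$ lies strictly below this line. Applying \eqref{assumption:busemann geodesics are independent of the weights behind it} with base point $a-2e_2+e_1$, the whole family $\{\walk{z}\}$ appearing in Step 1 is independent of the weights strictly below that line. In particular it is independent of $\bar\w_{a-2e_2}$, and therefore so is $E_{k,r}$.

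\textbf{Main obstacle.} The delicate point is Step 1. There one must check that no constraint in $E_{k,r}$ secretly involves the arrow $\arrow(a-2e_2)$ itself, since that arrow depends on $\w_{a-2e_2}$ through \eqref{eq:arrow definition}. The approach I would take first is to rewrite every such constraint through compatibility as equalities or disjointness of walks launched strictly ahead. If some constraint genuinely cannot be rewritten this way, the claim must be read with $E_{k,r}$ built only from those forward walks.
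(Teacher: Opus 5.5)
The paper contains no proof of this statement. It survives only in a suppressed \texttt{oldnotes} block in which neither $E_{k,r}$ nor $\future{\cdot}$ is ever defined, so your argument has to stand on its own. It breaks at exactly the point you flag as the main obstacle.

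In Step~1 you encode the required first step of $\walk{a-2e_2}$ as ``its successor's walk agrees with the prescribed neighbour's walk'' and call this an event about walks started in $\future{a-2e_2}$. It is not. The successor is $\walk{a-2e_2}(1)=a-2e_2+\arrow(a-2e_2)$, so $\{\walk{\walk{a-2e_2}(1)}=\walk{a-2e_2+e_1}\}$ is literally the event $\{\arrow(a-2e_2)=e_1\}$. Compatibility \eqref{eq:compatibility} transfers everything after the first step to forward walks, but never the first step itself. The walks started strictly ahead of $a-2e_2$ carry no information about $\arrow(a-2e_2)$. So Step~1 fails, and with it the reduction to a single application of backward independence.

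This cannot be repaired by a better rewriting, because in the paper's edge-weight framework the first step genuinely depends on $\bar\w_{a-2e_2}$. Take $E=\{e_1,e_2\}$ and $x=a-2e_2$. Then \eqref{eq:arrow definition} gives, up to ties, $\arrow(x)=e_1$ if and only if $\w_{x,x+e_1}-\w_{x,x+e_2}<B(x+e_2,x+e_1)$. Any non-coalescence event for $\walk{a-2e_2}$ and $\walk{a-e_2}$ forces $\arrow(a-2e_2)=e_1$, since $a-2e_2+e_2=a-e_2$. This is exactly how $E_{a,k}$ forces $\arrow(a-e_1)=e_2$ and $\arrow(a-e_2)=e_1$. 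So such an event is in general not independent of $\bar\w_{a-2e_2}$.

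The claim can only hold under extra structure that makes $\arrow(x)$ a function of forward data. An example is vertex weights, $\w_{x,x+e_1}=\w_{x,x+e_2}$, where $\arrow(x)$ is decided by the sign of $B(x+e_1,x+e_2)$. Even then you need this Busemann increment to be independent of $\bar\w_x$. That is a statement about $B$ rather than about walks, and \eqref{assumption:busemann geodesics are independent of the weights behind it} as stated does not provide it.

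Your fallback, reading $E_{k,r}$ as built only from forward walks, proves a different and essentially tautological statement: a direct instance of property~\eqref{assumption:busemann geodesics are independent of the weights behind it}.

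A secondary issue concerns Step~2. If antidiagonal steps are allowed, $\future{a-2e_2}$ in your sense contains the whole antidiagonal line through $a-2e_2$. Then the bound $(z-(a-2e_2))\cdot\ed\ge 1$ fails, and backward independence no longer separates those walks from $\bar\w_{a-2e_2}$.
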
 
}

\notes{
We hoped to prove that there is a generalization of \Thmref{thm:the coalescence time of three vertices has finite expectation} that actually relies on the i.i.d.\ structure of the weights to make a modification and lack of space argument in the spirit of Burton-Keane and Licea-Newman. We detail here why it doesn't work.

Actually in 2026 we have done it. But still we retain a part of it since it shows why it doesn't work in standard LPP.
\begin{lemma}
    $A_{k,x}$ is independent of $E_{k,x}$.
\end{lemma}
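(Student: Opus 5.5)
The plan is to derive the independence from the backward-independence property \eqref{assumption:busemann geodesics are independent of the weights behind it}. I do not intend to compute anything about the joint law of the walks. The idea is to sandwich the two events on opposite sides of a single antidiagonal line $L_b$. The event $E_{k,x}$ should be measurable with respect to the walks $\{\walk{z}: (z-b)\cdot\ed\ge 0\}$. The event $A_{k,x}$ should be measurable with respect to the weights $\{\bar\w_y : (y-b)\cdot\ed<0\}$ strictly below $L_b$. Property \eqref{assumption:busemann geodesics are independent of the weights behind it} then gives independence at once.

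\emph{Step 1: measurability of $E_{k,x}$.} By definition, $E_{k,x}$ is the event that the coalescence time of the relevant triple of walks exceeds $k$. It is therefore a function of the paths $\walk{z}$ for the three starting vertices $z$ of the triple; the coalescence points $\coalpt{\cdot,\cdot}$ are read off from these paths. Let $b$ be the starting vertex of the triple with the smallest $\ed$-level. Then every starting vertex $z$ satisfies $(z-b)\cdot\ed\ge0$, so $E_{k,x}\in\sigma\{\walk{z}:(z-b)\cdot\ed\ge 0\}$. This step uses compatibility \eqref{eq:compatibility}: it guarantees that no information beyond these walks is needed.

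\emph{Step 2: measurability of $A_{k,x}$.} The event $A_{k,x}$ is a condition on the environments $\bar\w_y$ at the vertices $y$ of the small wedge $V$ behind the triple. I must check that every $y\in V$ satisfies $y\cdot\ed<b\cdot\ed$, i.e.\ that $V$ lies strictly below $L_b$. Once this holds, $A_{k,x}$ belongs to the $\sigma$-field of weights strictly below $L_b$.

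\emph{Step 3: conclusion.} Property \eqref{assumption:busemann geodesics are independent of the weights behind it} with $a=b$ gives that $\sigma\{\walk{z}:(z-b)\cdot\ed\ge0\}$ is independent of the weights strictly below $L_b$. Hence $\Prob(A_{k,x}\cap E_{k,x})=\Prob(A_{k,x})\Prob(E_{k,x})$.

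The main obstacle is the geometric check in Step 2. With $V=\{x-ie_1-je_2\}_{i,j=1,2}\setminus\{\cdots\}$ as sketched, some vertices of $V$ may lie on the same antidiagonal level as the lowest vertex of the triple. An example is $x-e_1-e_2$ versus $x-2e_2$, both at level $x\cdot\ed-2$. Such vertices are not strictly below $L_b$. In that case, the first fix I would try is to shrink $A_{k,x}$ so that it constrains only vertices at levels strictly less than $b\cdot\ed$. A second option is to use the finer observation that $E_{k,x}$ depends only on the walks from the triple vertices. Those walks never visit vertices of $V$ on level $b\cdot\ed$ other than $b$ itself, because admissible steps strictly increase the $\ed$-level. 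So one would argue that $E_{k,x}$ does not depend on $\bar\w_y$ at such $y$. Making this rigorous requires knowing that the walks are built from Busemann functions that are themselves backward-independent in this finer sense, which is exactly the point that may fail.
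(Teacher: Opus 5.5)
Your Step 2 fails for the event in the statement, and the failure is larger than the boundary-level issue you raised. You took $A_{k,x}$ to live on the small wedge $V$ from the older, discarded notes. In this lemma, however, the triple is $x$, $x-ke_1$, $x-ke_2$ with $k>1$. The event $A_{k,x}$ constrains the weights on the whole rectangle $F_{x,r}$ with corners $x$, $x-re_1$, $x-re_2$, $x-re_1-re_2$, where $r>k$. It asks for large weights ($\ge p$) on the south and west sides and small weights ($\le q$) inside. The blocking argument that follows needs the inside bound to control what a geodesic collects within $F_{x,r}$.

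Your route does handle the south and west sides, since they lie at levels $\le x\cdot\ed-r<x\cdot\ed-k$. The interior constraint is the problem. The rectangle reaches up to level $x\cdot\ed$ and contains the triangle $\{x-ie_1-je_2: i,j\ge0,\ i+j\le k\}$, which lies on or above $L_{x-ke_1}=L_{x-ke_2}$. For example, the interior site $x-e_1-e_2$ sits at level $x\cdot\ed-2\ge x\cdot\ed-k$. So $A_{k,x}$ is not measurable with respect to the weights strictly below the antidiagonal through the lowest vertex of the triple. The half-plane property (d) then says nothing about how $E_{k,x}$ depends on those sites. Your first fix, shrinking $A_{k,x}$, proves independence for a different event, not the one in the statement. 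Your second fix is left unproved.

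The missing ingredient is exactly that finer, quadrant-type measurability, and the paper does not try to derive it from (d). It is built into the hypothesis of the surrounding ``Not a theorem'': $B(x,y)$ is assumed measurable with respect to the weights in $\future{x}\cup\future{y}$. Under that hypothesis the arrows, and by compatibility the walks $\walk{x},\walk{x-ke_1},\walk{x-ke_2}$, are functions of the weights in $\future{x}\cup\future{x-ke_1}\cup\future{x-ke_2}$. Hence so is $E_{k,x}$. The sites $x-ie_1-je_2$ with $i,j\ge1$ lie in none of these three quadrants, even though they sit above $L_{x-ke_1}$; a half-plane statement cannot make this distinction. Independence then follows from the i.i.d.\ weights on disjoint index sets.

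For the supports to be truly disjoint, $A_{k,x}$ must also leave alone the sites $x-ie_1$ and $x-je_2$, $0\le i,j\le k$. These lie both in $F_{x,r}$ and in the futures, and the paper's one-line proof glosses over this. The authors themselves remark that this measurability ``probably needs differentiability'', so your doubt is reasonable. Still, your route cannot deliver the lemma as stated: it needs either that hypothesis or a different event.
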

The lemma follows from the fact that $E_{k,x}$ only depends on Busemann increments that lie in $\future{x} \cup \future{x - k e_1} \cup \future{x - k e_2} $. Since $A_{k,x}$ depends on weights that lie in $F_{x,r}$, these two events are independent. \notes{This probably needs differentiability in direction $\xi$, associated with the Busemann function.}

\begin{theorem}[\emph{Not a theorem}]
    Suppose we have i.i.d.\, weights, and assume that the $B(x,y)$ is a measurable function of only the weights in $\future{x} \cup \future{y}$. Let $T_{k,a} = \min \left( \coal(a,a-k e_1), \coal(a,a - k e_2) \right)$ for $k > 1$. Then
    \[
        \E[T_{k,a}] < \infty.
    \]
    \label{thm:generalized finite expected coalescence time theorem with iid weights}
\end{theorem}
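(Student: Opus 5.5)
The plan is to follow the proof of Theorem \ref{thm: at least one of pair of geodesics distance k apart expected to coalesce in finite time}, with $E=\{e_1,e_2\}$ and the outer vertices $a\pm k\ead$ replaced by $a-ke_1$ and $a-ke_2$.

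\textbf{Step 1: the counting sum.} Take the diagonal box $R$ of \eqref{eq:definition of rectangular box} with side $n$. Keep only the sites $a=i\ed+3kj\ead$, so that the triples $(a-ke_1,a,a-ke_2)$, together with their barrier regions, are pairwise disjoint. Consider
\[
I=\sum_{i=0}^{n-1}\sum_{j}1_{A_{k,a}}1_{E_{a,2(n-i)}},\qquad E_{a,m}=\{T_{k,a}>m\},
\]
where $A_{k,a}$ is a barrier event. It is measurable with respect to weights strictly below the antidiagonal line through $a-ke_1$ and $a-ke_2$.

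\textbf{Step 2: independence and positivity.} By the hypothesis that $B(x,y)$ is measurable with respect to $\future{x}\cup\future{y}$, the arrows at and above that line, and hence $E_{a,m}$, depend only on weights in the future of the three starting vertices. Therefore $A_{k,a}$ and $E_{a,m}$ are independent; this is the content of the lemma just before the statement. Once the construction in Step 3 is fixed, positivity $\Prob(A_{k,\origin})>0$ should follow from i.i.d.\ weights with unbounded support.

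\textbf{Step 3: the geometric claim.} Define $M(a)$ as the open region bounded by $\walk{a-ke_1}$, $\walk{a-ke_2}$ (up to their coalescence point) and the lattice segment $S_a=\{a-(k-l)e_1-le_2:0\le l\le k\}$ joining them. The goal is the analogue of Claim \ref{claim: the distance to coalescence}: on $A_{k,a}\cap E_{a,2(n-p)}\cap A_{k,b}\cap E_{b,2(n-q)}$, one has $\coal(a,b)\ge\min(2(n-p),2(n-q))$.

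Planarity and uniqueness of forward walks give this, provided that no walk started outside $M(a)$ can enter $M(a)$ except by first merging into $\walk{a-ke_1}$ or $\walk{a-ke_2}$. The only danger is a walk entering through an interior vertex $v\in S_a$, $0<l<k$, from below.

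The barrier is meant to prevent this. On $A_{k,a}$, I would make the two incoming edges into each interior vertex of $S_a$ very heavy, and put light weights on cheap corridors below $S_a$ leading to $a-ke_1$ and $a-ke_2$. Given the claim, the rest is verbatim: coalescence points lie outside $R$, exit points on $\partial R$ are distinct, so $I\le 3n$. Taking expectations gives $\sum_{m\le n}\Prob(T_{k,\origin}>2m)\le C k/\Prob(A_{k,\origin})$, and letting $n\to\infty$ yields $\E[T_{k,\origin}]<\infty$.

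\textbf{Main obstacle.} The main obstacle is the blocking statement in Step 3, the analogue of Lemma \ref{lem:the edge configuration on Ba blocks geodesics}. With only $e_1,e_2$ steps there is no admissible detour that starts just below $v\in S_a$, goes around through an endpoint of $S_a$, and rejoins the same continuation above $S_a$. Every such route must cross $\walk{a-ke_1}$ or $\walk{a-ke_2}$, and whether it is cheaper depends on weights above the line, which the barrier cannot control without destroying the independence of Step 2.

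I would first try to compare passage times to a far point $x_n$ in the Busemann direction, as in \eqref{eq:busemann-fn-definition}. This would show that a heavy entry edge makes $B(v-e_i,v)-\w_{v-e_i,v}$ lose to the other arrow choice, so that the arrow at $v-e_i$ never points into $S_a$. The difficulty is that this arrow comparison only decides between the two edges at a single vertex. It cannot stop a walk from reaching $S_a$ through the other neighbour, which is precisely why the antidiagonal edges were needed in Theorem \ref{thm: at least one of pair of geodesics distance k apart expected to coalesce in finite time}. I expect this step may genuinely fail for standard LPP, in which case the argument would have to be rerouted.
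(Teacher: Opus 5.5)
Your reduction (Steps~1 and~2 and the final counting) is sound, but the gap is Step~3, and it carries the whole argument. Without a blocking statement there is no analogue of Claim~\ref{claim: the distance to coalescence}. Then nothing forces $\coalpt{a,b}$ outside $R$ or makes the exit points on $\partial R$ distinct, so the bound $I\le 3n$ and everything after it are unavailable. Your suspicion that this step fails for $E=\{e_1,e_2\}$ is confirmed by the paper itself. The statement is explicitly labeled \emph{Not a theorem}, and it is kept in the notes to record why the barrier argument breaks down in standard LPP. The paper therefore contains no proof of it.

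The authors' attempt is a variant of yours. They take the L-shaped lattice path $a-ke_1,\ldots,a,\ldots,a-ke_2$ as the bottom of the protected region. This is compatible with independence, because the vertices strictly south-west of $a$ lie outside $\future{a-ke_1}\cup\future{a-ke_2}$. They place the barrier in a square with north-east corner $a$, with heavy weights on its south and west sides and light weights inside. A walk entering from the west side can be kept off the top row between $a-ke_1$ and $a$, by comparison with the path up the heavy west side and along the top row. It may instead cross the square and exit through the east side between $a-ke_2$ and $a$. From rows strictly between those of $a-ke_2$ and $a$, every admissible path to such an exit avoids both $a-ke_1$ and $a-ke_2$. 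No path inside the square that collects more heavy than light weights rejoins the walk there, so no contradiction with geodesicity arises.

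The structural reason is the one you name. In Lemma~\ref{lem:the edge configuration on Ba blocks geodesics}, a walk entering $M(a)$ other than through $a\pm k\ead$ must use a heavy interior edge of $B_a$. The $\pm\ead$ edges then supply a competing path with the \emph{same} endpoints, through $a-k\ead$ or $a+k\ead$. Its single $e_1$ or $e_2$ edge is controlled by $F_{a,k}$, and all its other edges are antidiagonal with weights deterministically in $[0,c]$. So the comparison never needs to constrain a weight that influences the non-coalescence event.

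With only $e_1,e_2$ steps nothing of this kind exists. In your version the entry vertices are interior points $v$ of $S_a$. These lie on the same antidiagonal as $a-ke_1$ and $a-ke_2$, so no admissible path through those two points reaches $v$. For any $s\prec v$, some path from $s$ to $v$ is a geodesic whatever the weights, so no comparison with the same endpoints can exclude a walk through $v$. A detour avoiding the interior of $S_a$ can rejoin the walk only inside $\future{a-ke_1}\cup\future{a-ke_2}$, and the barrier cannot constrain those weights without losing Step~2.

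Your arrow-level fallback does not get around this. The arrow at a vertex just below $S_a$ is decided by local weights against a Busemann increment between its two successors. That increment depends on weights beyond any fixed barrier and is unbounded, so no fixed heaviness threshold forces the arrow away from $v$. A non-product barrier event would require controlling the conditional law of these increments given $\{T_{k,a}>m\}$, which is not available. Moreover, for $k\ge3$ some vertices have both successors in the interior of $S_a$.

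As it stands the proposal does not prove the statement. What the paper does prove is Theorem~\ref{thm: at least one of pair of geodesics distance k apart expected to coalesce in finite time}, where antidiagonal edges are added so that the blocking lemma holds.
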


This appears to require a modification argument. We define an event $A_{k,x}$ on the weights in the rectangle $F_{x,r}$ defined by the four corners $x,x-re_1,x-r e_2,x-re_1 - r_2$. $r > k$ is a parameter (to be determined) that will depend on $k$ and the law of the weights. The weights will be chosen such that from any point on the south (west) boundary of $F_{x,r}$, Busemann geodesics may only travel up (right).  Precisely, let $A_{k,x}$ be the event that $\w_y \geq p$ for $p \in \supp(\w)$, for all $y$ on the south and west boundaries of $F_{x,r}$, and $\w_y \leq q$ for $q \in \supp(\w)$, such that $p$ and $q$ satisfy $(r-k)p > 2r q$. It's clear that for any fixed $k$, we may always find $p$, $q$ and $r$ as required above.

\begin{lemma}
    Let $\gamma$ be a Busemann geodesic that begins on the south or west boundary of $F_{x,r}$. Then, on the event $E_{k,x} \cap A_{k,x}$, cannot coalesce with $x$ before $\walk{x-ke_1}$ or $\walk{x-ke_2}$. 
    \label{lem:no geodesics passing through x}
\end{lemma}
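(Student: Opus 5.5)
The statement to prove is the last lemma in the (discarded) notes: on $E_{k,x}\cap A_{k,x}$, a Busemann geodesic $\gamma$ started on the south or west boundary of $F_{x,r}$ cannot coalesce with $\walk{x}$ before it has coalesced with $\walk{x-ke_1}$ or $\walk{x-ke_2}$. I would imitate the region argument of Lemma \ref{lem:controlling the distance to coalescence}, with a blocking step in the role of Lemma \ref{lem:the edge configuration on Ba blocks geodesics}.

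\textbf{Step 1: the protecting region.} Let $M$ be the open region bounded by $\walk{x-ke_1}$, $\walk{x-ke_2}$ (up to their coalescence point, or the infinite region if they never meet), and the lattice segments from $x-ke_1$ to $x$ and from $x$ to $x-ke_2$. On $E_{k,x}$ neither outer walk has met $\walk{x}$ within $k$ steps, so $\walk{x}$ lies in $M$ until it first meets an outer walk. Walks are noncrossing and the forward walk from each point is unique. Hence any walk starting outside $M$ must enter $M$ through the two lattice segments, or else merge with an outer walk first.

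\textbf{Step 2: blocking.} It then suffices to show that, on $A_{k,x}$, a geodesic starting on the south or west boundary of $F_{x,r}$ cannot reach the segments $\{x-ie_1\}_{0<i<k}$ or $\{x-je_2\}_{0<j<k}$ except through $x-ke_1$ or $x-ke_2$. A geodesic that enters $F_{x,r}$ from the south boundary must go up, and the constraint $(r-k)p>2rq$ is meant to make travelling along the heavy boundary (weights $\ge p$) more costly than cutting through the light interior (weights $\le q$). The plan is a path-comparison argument, as in Lemma \ref{lem:the edge configuration on Ba blocks geodesics}: any path that reaches $x-ie_1$ with $i<k$ is replaced by one crossing to $x-ke_1$ or $x-ke_2$.

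\textbf{Main obstacle.} This comparison is where I expect the argument to fail. With $E=\{e_1,e_2\}$ and vertex weights, every path from a boundary point to a fixed target has the same length. A Busemann geodesic is only semi-infinite and is chosen by Busemann increments, not by finite passage times, so local weights cannot force it to avoid points. Concretely, the light interior is exactly what makes every interior vertex, including $x-ie_1$, attractive, so no choice of $p,q,r$ blocks the segments. This matches the paper's note that the i.i.d.\ modification ``doesn't work in standard LPP''. I would therefore first try to prove blocking only for finite geodesics ending on the north-east boundary. If that fails, the honest conclusion is that the lemma needs antidiagonal edges, which allow a cheap detour, as in Theorem \ref{thm: at least one of pair of geodesics distance k apart expected to coalesce in finite time}.
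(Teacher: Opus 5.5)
Your proposal does not prove the lemma: Step~2 is never carried out, and you predict it fails. The paper does not prove it either. The lemma sits in the discarded notes, which open with ``We detail here why it doesn't work,'' and its proof ends with ``there is no contradiction to be had.'' Your Steps~1--2 are the paper's own reduction: show that a geodesic from the south or west side of $F_{x,r}$ can leave through the north or east side only at $x-le_i$ with $l\ge k$, using a path comparison. So the gap is the same in both, namely the blocking step, and your prediction that $A_{k,x}$ cannot close it agrees with the paper.

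Your explanation of why it fails is off in two ways.
\begin{enumerate}
\item You claim a Busemann geodesic is chosen by Busemann increments rather than by finite passage times, so local weights cannot force it to avoid points. This is incorrect. By \eqref{walkisgeo}, every finite stretch of $\walk{z}$ is a point-to-point geodesic, so a competitor with the same endpoints and a better passage time does give a contradiction. That is exactly how Lemma~\ref{lem:the edge configuration on Ba blocks geodesics} works, and it is the comparison the paper uses for exits through the north side.
\item You have the roles of the weights reversed. The notes' comparison is last-passage: $G(\gamma')-G(\gamma)\ge (r-k)p-rq>0$ is what contradicts geodesicity. So the $p$-weighted south and west sides are meant to attract geodesics, and the $q$-weighted interior is what they avoid. ``The light interior makes $x-ie_1$ attractive'' is not the mechanism.
\end{enumerate}

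The real obstruction is the one the paper isolates. Take a geodesic $\gamma$ from the west side that crosses the east side at $b=x-je_2$ with $0<j<k$, so it enters the protected region without meeting $\walk{x-ke_2}$. Any competitor must rejoin $\gamma$ somewhere, and neither option works.
\begin{enumerate}
\item If it rejoins at or before $b$, it lies inside $F_{x,r}$. As the paper notes, no such path can be shown from $A_{k,x}$ alone to beat $\gamma$ in $p$-weight.
\item If it rejoins after $\gamma$ has left $F_{x,r}$, the comparison involves weights outside $F_{x,r}$, on which $A_{k,x}$ says nothing.
\end{enumerate}
For the same reason, your fallback to finite geodesics ending on the north-east boundary does not help: a finite geodesic from the west side to $b$ is already the failing case.

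The missing ingredient is a competitor with the same entry and exit points as the offending path, going around the barrier inside a region whose weights the event controls. Antidiagonal edges supply this in Lemma~\ref{lem:the edge configuration on Ba blocks geodesics}, where the detour along $\partial B_a$ uses at most $2k+1$ edges of weight $\le c$ and one corner edge of weight $\le\minw$. Your remark that all $\{e_1,e_2\}$-paths between fixed endpoints have the same length is the right intuition for why no such bounded detour exists here. Your closing conclusion, that antidiagonal edges are needed, matches the route the paper takes in Theorem~\ref{thm: at least one of pair of geodesics distance k apart expected to coalesce in finite time}.
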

The lemma implies the point $x$ is not connected to any point on the boundary of $F_{x,r}$ and hence is ``protected'' from other geodesics \notes{define more precisely}

\begin{proof}
    Suppose $\gamma$ begins on the west boundary of $x$, we show that $\gamma$ must either enter the north or south boundaries of $F_{x,r}$ at a point $a = x - l e_i$ such that $l \geq k$. This shows that $\gamma$ can only coalesce with $x$ after either $\walk{x - ke_i}$ coalesce with $x$.
    
    Suppose, $a = x - l e_1$. Then, $a$ must be to the right of $x - k e_1$, since otherwise it will have to pass through $\walk{x - ke_1}$, and must therefore only be able to coalesce with $\walk{x}$ after $\walk{x - k e_1}$ does. In this case, we compare the passage time of the $\gamma$ geodesic up to $a$ with the $\gamma'$ path that goes straight up to $x - r e_1$ and then goes horizontally to $a$. The $\gamma'$ path collects at least $(r - k)$ $p$ weights more that the $\gamma$ path. The $\gamma$ path could pick up at most $r$ weights that it does not share with $\gamma'$, and these must all be $q$ weights. Then, 
    \[
        G(\gamma') - G(\gamma) \geq (r - k) p - r q > 0.
    \]
    But since $\gamma$ was assumed to be a geodesic, this is a contradiction. 
    
    Therefore if the $\gamma$ starts on the west boundary at some point $a$, it must go through the east boundary of $F_{x,r}$ at some point $b$. But now, there is \emph{no contradiction to be had}, since we cannot construct an alternate path inside of $F_{x,r}$ that uses more $p$ vertices than $q$ vertices, and joins up with $\gamma$ at the point $b$! 
\end{proof}
}

\printbibliography

\end{document}